
\documentclass[12pt,a4paper]{amsart}
\usepackage[latin1]{inputenc}
\usepackage{graphicx}
\usepackage{amssymb, amsmath}
\usepackage{geometry}
\usepackage{enumerate}
\geometry{a4paper,twoside,top=3cm,bottom=3cm,left=3cm,right=3cm,headsep=1cm,headheight=3mm}

\newtheorem{theorem}{Theorem}[section]
\newtheorem{definition}[theorem]{Definition}
\newtheorem{lemma}[theorem]{Lemma}
\newtheorem{corollary}[theorem]{Corollary}
\newtheorem{proposition}[theorem]{Proposition}

\theoremstyle{definition}
\newtheorem{example}[theorem]{Example}
\newtheorem{remark}[theorem]{Remark}

\newcommand\remove[1]{}

\newcommand{\rnote}[1]{}
\newcommand{\jnote}[1]{}

\def\cprime{$'$}

\newcommand{\N}{\mathbb{N}}

\newcommand{\spn}{\mathrm{span}}
\newcommand{\sol}{\mathrm{sol}}

\newcommand{\dens}{\mathrm{dens}}

\numberwithin{equation}{section}

\begin{document}

\title[WCG Banach lattices]{Weakly compactly generated Banach lattices}

\author{A. Avil\'{e}s}
\address{Departamento de Matem\'{a}ticas\\ Facultad de Matem\'{a}ticas\\ Universidad de Murcia\\ 30100 Espinardo (Murcia)\\ Spain} 
\email{avileslo@um.es}

\author{A.J. Guirao}
\address{Instituto Universitario de Matem\'{a}tica Pura y Aplicada\\ Universitat Polit\`{e}cnica de Val\`{e}ncia\\ Camino de Vera s/n\\ 46022 Valencia\\ Spain}
\email{anguisa2@mat.upv.es}

\author{S. Lajara}
\address{Departamento de Matem\'{a}ticas\\Escuela de Ingenieros Industriales\\ Universidad de Castilla-La Mancha\\ 02071 Albacete\\ Spain} 
\email{sebastian.lajara@uclm.es}

\author{J. Rodr\'{i}guez}
\address{Departamento de Matem\'{a}tica Aplicada\\ Facultad de Inform\'{a}tica\\ Universidad de Murcia\\ 30100 Espinardo (Murcia)\\ Spain} 
\email{joserr@um.es}

\author{P. Tradacete}
\address{Departamento de Matem\'{a}ticas\\ Universidad Carlos III de Madrid\\ 28911 Legan\'es (Madrid)\\ Spain}
\email{ptradace@math.uc3m.es}

\thanks{Research partially supported by {\em Ministerio de Econom\'{i}a y Competitividad} and {\em FEDER} 
under projects MTM2014-54182-P (A.~Avil\'{e}s, S.~Lajara and J.~Rodr\'{i}guez), MTM2014-57838-C2-1-P (A.J.~Guirao), 
MTM2012-34341 (S.~Lajara), MTM2012-31286 (P.~Tradacete) and MTM2013-40985 (P.~Tradacete).
This work was also partially supported by the research projects 19275/PI/14 (A.~Avil\'{e}s, S.~Lajara and J.~Rodr\'{i}guez)
and 19368/PI/14 (A.J.~Guirao) funded by {\em Fundaci\'{o}n S\'{e}neca - Agencia de Ciencia y Tecnolog\'{i}a de la Regi\'{o}n de Murcia} 
within the framework of {\em PCTIRM 2011-2014}. The research of P.~Tradacete was also partially supported by Grupo UCM 910346}

\subjclass[2010]{46B42, 46B50}
\keywords{Banach lattice; order continuous norm; weakly compact set; weakly compactly generated Banach space}

\begin{abstract}
We study the different ways in which a weakly compact set can generate a Banach lattice. 
Among other things, it is shown that in an order continuous Banach lattice $X$, 
the existence of a weakly compact set $K\subset X$ such that $X$ coincides with the band generated by $K$, implies that $X$ is WCG.
\end{abstract}

\maketitle

\thispagestyle{empty}


\section{The general problem}

The purpose of this note is to study Banach lattices which are generated in one way or another by a weakly compact set. 
Namely, we will explore the connection between the existence of a weakly compact set which generates a Banach lattice as a linear space, 
a lattice, an ideal or a band. Our motivation starts with the question of J.~Diestel of whether 
every Banach lattice which is generated, as a lattice, by a weakly compact set must be weakly compactly generated (i.e., as a linear space).

Recall that a Banach lattice is a Banach space endowed with additional order and lattice structures 
which behave well with respect to the norm and linear structure. This is in particular highlighted by the 
fact that $\|x\|\leq\|y\|$ whenever $|x|\leq|y|$, or by the norm continuity of the lattice operations $\wedge$ and $\vee$. 
However, for the weak topology, the relation with the order and lattice structures is more subtle, 
in particular it is not always true that the lattice operations are weakly continuous. In fact, on infinite dimensional Banach lattices 
the weak topology fails to be locally solid (see e.g. \cite[Theorem~6.9]{ali-bur-2}).

A Banach space $X$ is called {\em weakly compactly generated} (WCG) whenever there exists a weakly compact subset of~$X$ 
whose closed linear span coincides with~$X$. This class of Banach spaces was first studied by Corson~\cite{cor-J} and 
it was pushed further by the fundamental work of Amir and Lindenstrauss~\cite{ami-lin}. 
Nowadays, WCG spaces play a relevant role in non-separable Banach space theory.
For complete information on WCG spaces, see \cite{fab-ultimo,fab-alt-JJ,ziz}.

Weakly compact sets and weakly compact operators in Banach lattices have been the object of research 
by several authors (cf. \cite{ali-bur:81,ali-bur:84,che-wic:00,nic}, 
see also the monographs \cite[Chapter 4.2]{ali-bur} and \cite[Chapter 2.5]{mey2}). 
In particular, WCG Banach lattices have been considered in \cite{buk-alt} and \cite{saa-saa:83}.

Before introducing the main notions of the paper let us recall that a {\em sublattice} 
of a Banach lattice $X$ is a subspace which is also closed under the lattice operations $\vee$ and $\wedge$. 
Also, an {\em ideal} $Y$ of $X$ is a subspace with the property that $|x|\leq|y|$ with $y\in Y$ implies that $x\in Y$. 
Finally, a {\em band} $Z$ of $X$ is an ideal for which $\sup(A)\in Z$ whenever $A\subset Z$ and $\sup(A)$ exists in~$X$. 
Unless otherwise mentioned, all subspaces, sublattices, ideals and bands in this paper are assumed to be closed. 
Given a subset $A$ of a Banach lattice~$X$, we will denote by $\overline{{\rm span}}(A)$, $L(A)$, $I(A)$ and $B(A)$ 
the smallest subspace (respectively, sublattice, ideal and band) of $X$ containing~$A$.

\begin{definition}\label{definition:WCGs}
Let $X$ be a Banach lattice. We will say that
\begin{enumerate}
  \item[(i)] $X$ is weakly compactly generated as a lattice (LWCG) if there is a weakly compact set $K\subset X$ such that $X=L(K)$.
  \item[(ii)] $X$ is weakly compactly generated as an ideal (IWCG) if there is a weakly compact set $K\subset X$ such that $X=I(K)$.
  \item[(iii)] $X$ is weakly compactly generated as a band (BWCG) if there is a weakly compact set $K\subset X$ such that $X=B(K)$.
\end{enumerate}
\end{definition}

Since for every set $A\subset X$ the inclusions $\overline{{\rm span}}(A)\subset L(A)\subset I(A)\subset B(A)$ hold, we clearly have 
$$
	WCG\Rightarrow LWCG\Rightarrow IWCG\Rightarrow BWCG.
$$
Our interest is whether the converse implications hold. The equivalence between LWCG and WCG for general Banach lattices
seems to be an open question which was raised by J.~Diestel during the conference ``Integration, Vector Measures 
and Related Topics IV'' held in La Manga del Mar Menor, Spain, 2011.

The paper is organized as follows: 

In Section~\ref{section:Basic} we provide a first approach
to the comparison between the notion of WCG Banach lattice and the weaker versions introduced above. 
For instance, we prove that LWCG=WCG for Banach lattices having weakly sequentially continuous lattice operations
(Theorem~\ref{theorem:wsc-operations}). We also show that, in general, IWCG$\neq$LWCG and BWCG$\neq$IWCG
(Examples~\ref{example:Eberlein} and~\ref{example:Lorentz}).

In Section~\ref{section:OC} we prove that BWCG=WCG for order continuous Banach lattices
(Theorem~\ref{t:order cont IWCG=WCG}). Some related results on Dedekind complete Banach lattices are also given. 
As a by-product of our methods we provide some applications to weakly precompactly generated Banach lattices.

In Section~\ref{section:DFJP} we apply the factorization method
of Davis-Figiel-Johnson-Pe\l czy\'{n}ski in our framework. For instance, it is shown that an IWCG Banach lattice
not containing $C[0,1]$ is Asplund generated (Theorem~\ref{theorem:AsplundGenerated}).

In Section~\ref{section:Miscellaneous} we collect some results
about the stability of weakly compact generation properties in Banach lattices. 
In general, the property of being LWCG is not inherited by sublattices. 
We discuss the three-space problem for LWCG Banach lattices (Example~\ref{example:3SP} and Theorem~\ref{theorem:3space})
and the connection of these properties with weakly Lindel\"{o}f determined Banach spaces.

We use standard Banach space/lattice terminology as can be found in \cite{ali-bur}, \cite{lin-tza-2} and~\cite{mey2}. 
By an {\em operator} between Banach spaces we mean a linear continuous map. 
The closed unit ball of a Banach space~$X$ is denoted by~$B_X$ and the dual of~$X$ is denoted by~$X^*$.
The weak$^*$ topology of~$X^*$ is denoted by~$w^*$.
The symbol $X_+$ stands for the positive cone of a Banach lattice $X$ and we write $C_+=C\cap X_+$
for any $C \subset X$.

\section{Basic approach}\label{section:Basic}

Given a Banach lattice $X$, for a set $A\subset X$ we define
\begin{displaymath}
	A^\wedge:=\Big\{\bigwedge_{i=1}^n a_i:\,n\in\mathbb N,\,(a_i)_{i=1}^n\subset A\Big\},
\end{displaymath}
\begin{displaymath}
	A^\vee:=\Big\{\bigvee_{i=1}^n a_i:\,n\in\mathbb N,\,(a_i)_{i=1}^n\subset A\Big\}.
\end{displaymath}
We will denote $A^{\wedge\vee}:=(A^\wedge)^\vee$ and $A^{\vee\wedge}:=(A^\vee)^\wedge$. Using the distributive law of the lattice operations, 
it is easy to see that $A^{\vee\wedge}=A^{\wedge\vee}$ and that 
\begin{equation}\label{eqn:L}
	L(A)=\overline{\spn(A)^{\vee\wedge}}
\end{equation}
(see e.g. \cite[p.~204]{ali-bur}). The {\em solid hull} $\sol(A)$ of~$A$ is the smallest solid subset of~$X$ containing~$A$, which
can be written as 
$$
	\sol(A)=\bigcup_{x\in A}[-|x|,|x|].
$$ 
It is not difficult to check that 
\begin{equation}\label{eqn:I}
	I(A)=\overline{\spn}(\sol(A)).
\end{equation}
The disjoint complement of $A$ is defined as  
$$
	A^\perp=\{x\in X: \, |x|\wedge|y|=0 \textrm{ for every }y\in A\}.
$$
It is well known that  
\begin{equation}\label{eqn:B}
	B(A)=A^{\perp\perp}
\end{equation}
(see e.g. \cite[Proposition 1.2.7]{mey2}).

Recall that an operator between Banach lattices $T:X\rightarrow Y$ is said to be: 
\begin{itemize}
\item {\em lattice homomorphism}, if $T(x_1\vee x_2)=(Tx_1)\vee (Tx_2)$ for every $x_1,x_2\in X$;
\item {\em interval preserving}, if it is positive and $T[0,x]=[0,Tx]$ for every $x\in X_+$.
\end{itemize}

\begin{proposition}\label{pro:onto}
Let $X$ and $Y$ be Banach lattices and $T:X\rightarrow Y$ an  operator with dense range. 
\begin{enumerate}
\item[(i)] If $X$ is LWCG and $T$ is a lattice homomorphism, then $Y$ is LWCG. 
\item[(ii)] If $X$ is IWCG and $T$ is an interval preserving lattice homomorphism, then $Y$ is IWCG.  
\end{enumerate}
\end{proposition}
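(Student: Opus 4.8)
The plan is to push the generating weakly compact set $K$ through $T$ and to show that $T(K)$ witnesses the corresponding property in $Y$. The first observation, common to both parts, is that a bounded operator maps weakly compact sets to weakly compact sets: $T$ is continuous from $(X,w)$ into $(Y,w)$, and the continuous image of a weakly compact set is weakly compact, so $T(K)$ is weakly compact whenever $K$ is. It then remains to identify $T(K)$ as a generator of the appropriate type.

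For part (i), assume $X=L(K)$. Since $T$ is a lattice homomorphism it preserves $\vee$, and by linearity it also preserves $\wedge$ (using $x\wedge y=-((-x)\vee(-y))$); hence $T$ commutes with finite joins and meets. Together with $T(\spn(K))=\spn(T(K))$, I would deduce the identity
\begin{displaymath}
	T\big(\spn(K)^{\vee\wedge}\big)=\spn(T(K))^{\vee\wedge}.
\end{displaymath}
By \eqref{eqn:L} the hypothesis $X=L(K)$ means $\spn(K)^{\vee\wedge}$ is dense in $X$; applying the continuous map $T$ and using that its range is dense, $\spn(T(K))^{\vee\wedge}$ becomes dense in $Y$, so $Y=\overline{\spn(T(K))^{\vee\wedge}}=L(T(K))$ and $Y$ is LWCG.

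For part (ii), assume $X=I(K)$. The core of the argument will be to show that $T$ carries each order interval onto an order interval,
\begin{displaymath}
	T\big([-|x|,|x|]\big)=[-|Tx|,|Tx|]\qquad(x\in X).
\end{displaymath}
Writing $[-|x|,|x|]=\{z:|z|\le|x|\}$ and using $T|z|=|Tz|$ (valid because $T$ is a lattice homomorphism) together with positivity gives the inclusion $\subseteq$. For the reverse inclusion I would take $w$ with $|w|\le|Tx|=T|x|$, split $w=w^+-w^-$, and note that $w^+,w^-\in[0,T|x|]=T[0,|x|]$ by the interval preserving hypothesis; choosing $u,v\in[0,|x|]$ with $Tu=w^+$ and $Tv=w^-$, the element $z:=u-v$ satisfies $Tz=w$ and $|z|\le|x|$. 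This is the step I expect to be the main obstacle, and it is precisely where the interval preserving assumption is needed.

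Granting the interval identity and recalling $\sol(K)=\bigcup_{x\in K}[-|x|,|x|]$, it follows that $T(\sol(K))=\sol(T(K))$. Since $\spn(\sol(K))$ is dense in $X=I(K)$ by \eqref{eqn:I}, applying $T$ and using its dense range makes $\spn(\sol(T(K)))$ dense in $Y$, whence $Y=\overline{\spn}(\sol(T(K)))=I(T(K))$ and $Y$ is IWCG. All the remaining work beyond the order-interval computation is the routine density bookkeeping indicated above.
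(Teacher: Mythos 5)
Your proposal is correct and follows essentially the same route as the paper: push the generating weakly compact set $K$ forward to $T(K)$ (weakly compact by weak-weak continuity) and verify via \eqref{eqn:L}, respectively \eqref{eqn:I}, that it generates $Y$ in the appropriate sense. The paper merely asserts the identities $L(T(A))=\overline{T(L(A))}$ and $I(T(A))=\overline{T(I(A))}$, which your computations --- including the order-interval identity $T([-|x|,|x|])=[-|Tx|,|Tx|]$ --- correctly establish in detail.
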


\begin{proof}
(i) Since $T$ is a lattice homomorphism, we have
$L(T(A))=\overline{T(L(A))}$ for any $A \subset X$.
In particular, if $K\subset X$ is a weakly compact set such that $X=L(K)$, 
then $T(K)$ is a weakly compact set in~$Y$ such that $Y=\overline{T(X)}=L(T(K))$. 

(ii) Since $T$ is an interval preserving lattice homomorphism, 
$I(T(A))=\overline{T(I(A))}$ for any $A \subset X$.
Therefore, if $K\subset X$ is a weakly compact set such that $X=I(K)$, then
$T(K)$ is a weakly compact set in~$Y$ satisfying $Y=\overline{T(X)}=I(T(K))$.
\end{proof}

Recall that a Banach lattice is said to have {\em weakly sequentially continuous lattice operations} 
if $x_n\vee y_n$ converges weakly to $x\vee y$ whenever $(x_n)$ and $(y_n)$ converge weakly to $x$ and $y$, respectively. 
The basic examples of Banach lattices having weakly sequentially continuous lattice operations are AM-spaces 
(e.g. $C(K)$ spaces where $K$ is a compact Hausdorff topological space), see e.g. \cite[Theorem~4.31]{ali-bur}, 
and atomic order continuous Banach lattices (e.g. Banach spaces with unconditional basis), see e.g. \cite[Proposition~2.5.23]{mey2}.

\begin{theorem}\label{theorem:wsc-operations}
Let $X$ be a Banach lattice having weakly sequentially continuous lattice operations. Then $X$ is LWCG
if and only if it is WCG.
\end{theorem}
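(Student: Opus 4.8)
The implication WCG $\Rightarrow$ LWCG is already contained in the displayed chain of implications, so the whole content is the converse: LWCG $\Rightarrow$ WCG. Fix a weakly compact $K\subset X$ with $X=L(K)$. Replacing $K$ by the weak closure of its absolutely convex hull (weakly compact by Krein--\v{S}mulian), I may assume $K$ is weakly compact, convex, symmetric and $0\in K$. By \eqref{eqn:L} we have $X=L(K)=\overline{\spn(K)^{\vee\wedge}}$, so it suffices to produce a weakly compact set $\tilde K\subset X$ with $\spn(K)^{\vee\wedge}\subseteq \overline{\spn}(\tilde K)$: then the closed subspace $\overline{\spn}(\tilde K)$ contains $\overline{\spn(K)^{\vee\wedge}}=X$, which means $X$ is WCG.

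The engine of the argument is that \emph{one} lattice operation preserves relative weak compactness. If $A\subset X$ is relatively weakly compact, then so are $A\vee A:=\{a\vee b:a,b\in A\}$ and $A\wedge A$. Indeed, $\overline{A}^{\,w}\times\overline{A}^{\,w}$ is weakly compact in $X\times X$, hence weakly sequentially compact by Eberlein--\v{S}mulian; given a sequence $a_k\vee b_k$ in $A\vee A$, pass to a subsequence with $(a_k,b_k)$ weakly convergent, and weak sequential continuity of the lattice operations forces $a_k\vee b_k$ to converge weakly. Thus $A\vee A$ is relatively weakly sequentially compact, i.e.\ relatively weakly compact; for $\wedge$ use $a\wedge b=-((-a)\vee(-b))$. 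With this I build an increasing sequence of weakly compact convex sets by $C_0:=K$ and $C_{n+1}:=\overline{\conv}^{\,w}\big(C_n\cup (C_n\vee C_n)\cup (C_n\wedge C_n)\big)$; each $C_n$ is weakly compact (Krein--\v{S}mulian applied to the relatively weakly compact union), convex, contains $0$, and $C_n\subseteq C_{n+1}$.

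Next I capture $\spn(K)^{\vee\wedge}$ inside $\spn\big(\bigcup_n C_n\big)$ using positive homogeneity of $\vee,\wedge$ and the symmetry/convexity of the $C_n$. Since $K$ is absolutely convex, every $x\in\spn(K)$ has the form $x=\lambda w$ with $\lambda\ge 0$ and $w\in K=C_0$. For a finite join $\bigvee_{i} \lambda_i w_i$, factoring out $\Lambda:=\max_i\lambda_i$ gives $\Lambda\,\bigvee_i s_i w_i$ with $s_i\in[0,1]$; as $s_iw_i\in C_0$ and an $n$-fold pairwise join of elements of $C_0$ lies in $C_{n-1}$, this is a nonnegative multiple of an element of some $C_N$. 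Feeding these joins into the outer meet and factoring out the largest coefficient again (each $C_N$ being convex with $0$, it absorbs the remaining factors in $[0,1]$) shows that every $z\in\spn(K)^{\vee\wedge}$ is a scalar multiple of an element of some $C_{N'}$; hence $\spn(K)^{\vee\wedge}\subseteq\spn\big(\bigcup_n C_n\big)$. Finally I assemble a single weakly compact set: choosing $a_n>0$ with $a_n\sup_{x\in C_n}\|x\|\le 2^{-n}$, the set $L:=\{0\}\cup\bigcup_n a_nC_n$ is weakly compact (a sequence either stays in one weakly compact $a_nC_n$, or escapes to $0$ in norm, so Eberlein--\v{S}mulian applies), and $\tilde K:=\overline{\conv}^{\,w}(L)$ is weakly compact by Krein--\v{S}mulian. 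Since scalars are irrelevant for closed spans, $\overline{\spn}(\tilde K)\supseteq\spn\big(\bigcup_n C_n\big)\supseteq\spn(K)^{\vee\wedge}$, and we are done.

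The main obstacle is the step on which everything hinges: that applying a lattice operation to a weakly compact set keeps it relatively weakly compact. This is false for general Banach lattices, and it is precisely here that the hypothesis is used, combined with Eberlein--\v{S}mulian to convert weak \emph{sequential} control into genuine weak compactness. The remaining care is bookkeeping: the operations $\vee,\wedge$ are nonlinear and the relevant expressions involve unbounded scalars, but positive homogeneity lets me normalize coefficients into $[0,1]$ (absorbed by the convex sets $C_n$), and the summable rescaling $\sum 2^{-n}$ turns the countable union of weakly compact sets into one weakly compact set generating $X$.
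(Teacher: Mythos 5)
Your proof is correct and takes essentially the same route as the paper's: reduce to an absolutely convex weakly compact $K$ via Krein--Smulyan, use $X=\overline{\spn(K)^{\vee\wedge}}$, and exploit weak sequential continuity together with Eberlein--\v{S}mulian to show that lattice operations preserve (relative) weak compactness, finishing with the rescaled-union trick. The paper merely packages your explicit $C_n$-construction and bookkeeping more succinctly, by observing that $\spn(K)=\bigcup_n nK$ is weakly $\sigma$-compact and that $A^{\vee}$ and $A^{\wedge}$ remain weakly $\sigma$-compact, which already yields that $X$ is WCG.
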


\begin{proof}
Let $K \subset X$ be a weakly compact set such that $L(K)=X$. By the Krein-Smulyan theorem
(see e.g. \cite[Theorem~3.42]{ali-bur}), we can assume that $K$ is absolutely convex. Hence
$\spn(K)=\bigcup_{n\in \N}nK$ is weakly $\sigma$-compact (that is, a countable union of weakly compact sets). 
Since $X$ has weakly sequentially continuous lattice operations,
for any weakly $\sigma$-compact set $A \subset X$ we have that both $A^\vee$ and $A^\wedge$ are weakly $\sigma$-compact.
In particular, $\spn(K)^{\vee\wedge}$ is weakly $\sigma$-compact
and since
$$
	X=L(K)\stackrel{\eqref{eqn:L}}{=}\overline{\spn(K)^{\vee\wedge}},
$$ 
we have that $X$ is WCG.
\end{proof}

\begin{corollary}\label{cor:CK}
Let $K$ be a compact Hausdorff topological space. Then:
\begin{enumerate}
\item[(i)] $C(K)$ is IWCG.
\item[(ii)] $C(K)$ is LWCG if and only if it is WCG.
\end{enumerate}
\end{corollary}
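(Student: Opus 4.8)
The plan is to handle the two parts separately; both will follow quickly from tools already in place, so the proof is short.

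For (i), the key observation I would exploit is that $C(K)$ carries an order unit, namely the constant function $\1$, and that the ideal it generates is already the whole space. Concretely, I would take the singleton $W=\{\1\}$, which is trivially weakly compact (being finite), and compute its solid hull. Since
$\sol(\{\1\})=[-|\1|,|\1|]=[-\1,\1]=\{f\in C(K):\|f\|_\infty\le 1\}=B_{C(K)}$,
formula~\eqref{eqn:I} yields
$I(\{\1\})=\overline{\spn}(\sol(\{\1\}))=\overline{\spn}(B_{C(K)})=C(K)$.
Hence $C(K)$ is IWCG, witnessed by a one-point set.

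For (ii), I would simply specialize Theorem~\ref{theorem:wsc-operations}. Indeed, $C(K)$ is an AM-space, so its lattice operations are weakly sequentially continuous (see \cite[Theorem~4.31]{ali-bur}, as already noted before that theorem), and the theorem then gives at once that LWCG and WCG coincide for $C(K)$.

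As for the main obstacle: there is essentially none of substance here, which is why this is stated as a corollary. The entire content of (i) is the recognition that the order interval $[-\1,\1]$ determined by the unit is precisely the closed unit ball $B_{C(K)}$, and (ii) is a direct application of the preceding theorem. The only point requiring a touch of care is the identification $[-\1,\1]=B_{C(K)}$, which relies on the fact that the supremum norm on $C(K)$ is exactly the order-unit norm associated with $\1$; once this is in hand, both assertions are immediate.
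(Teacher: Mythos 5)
Your proof is correct and follows essentially the same route as the paper: part (i) is exactly the paper's observation that $C(K)=I(\{1_K\})$ (you merely spell out the identification $\mathrm{sol}(\{1_K\})=[-1_K,1_K]=B_{C(K)}$ and invoke~\eqref{eqn:I}), and part (ii) is, as in the paper, a direct application of Theorem~\ref{theorem:wsc-operations} together with the fact that AM-spaces have weakly sequentially continuous lattice operations.
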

\begin{proof}
(i) follows from the fact that for the constant function $1_K$ we clearly have
$$
C(K)=I(\{1_K\}).
$$ 

(ii) is a direct consequence of Theorem~\ref{theorem:wsc-operations} and the comments preceding it.
\end{proof}

\begin{example}\label{example:Eberlein}
It is well known that $C(K)$ is WCG if and only if $K$ is Eberlein compact \cite{ami-lin} (cf. \cite[Theorem 14.9]{fab-ultimo}). 
If $\omega_1$ denotes the first uncountable ordinal, then the ordinal segment $[0,\omega_1]$ with its usual topology 
is a compact space which is not Eberlein. Thus, $C[0,\omega_1]$ provides an example of an IWCG Banach lattice which is not LWCG. 
Another example of this situation is given by the space $\ell_\infty$ (see also Corollary~\ref{pro:separablepredual} below).
\end{example}

In general, it is not true that the solid hull of a weakly relatively compact set is also weakly relatively compact
(see e.g. \cite[p.~108]{mey2}). Banach lattices with this stability property are 
order continuous and were characterized in~\cite[Theorem~2.4]{che-wic}: these include atomic order continuous 
Banach lattices, as well as Banach lattices not containing~$c_0$. 

\begin{theorem}\label{pro:solidhull}
Let $X$ be a Banach lattice with the property that the solid hull of any weakly relatively compact set is weakly relatively compact.
Then $X$ is BWCG if and only if it is WCG.
\end{theorem}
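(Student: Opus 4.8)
The plan is to prove only the nontrivial implication $\textrm{BWCG}\Rightarrow\textrm{WCG}$, since its converse is already contained in the chain $\textrm{WCG}\Rightarrow\textrm{LWCG}\Rightarrow\textrm{IWCG}\Rightarrow\textrm{BWCG}$ recorded earlier. So I would start from a weakly compact set $K\subset X$ with $X=B(K)$ and aim to produce a (possibly different) weakly compact set whose closed linear span is the whole of $X$.

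The first step is to upgrade band generation to ideal generation. The hypothesis on solid hulls forces $X$ to be order continuous (as recalled in the comments immediately preceding the statement), and in an order continuous Banach lattice every closed ideal is a band (see e.g. \cite{mey2}). Since $I(K)$ is a closed ideal containing $K$, it is therefore a band; as $B(K)$ is by definition the smallest band containing $K$, this gives $B(K)\subseteq I(K)$, and the reverse inclusion always holds, so $I(K)=B(K)=X$. In other words, under the present hypothesis $X$ is in fact IWCG.

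The second step converts this into weak compact generation as a linear space, using the representation \eqref{eqn:I}: we have $X=I(K)=\overline{\spn}(\sol(K))$. Now $K$ is weakly compact, hence weakly relatively compact, so the standing hypothesis applied to $K$ yields that $\sol(K)$ is weakly relatively compact; let $W$ be its weak closure, a weakly compact subset of $X$ (note $X$, being norm closed, is weakly closed). From $\sol(K)\subseteq W\subseteq X$ we obtain, on passing to closed linear spans, $X=\overline{\spn}(\sol(K))\subseteq\overline{\spn}(W)\subseteq X$, whence $X=\overline{\spn}(W)$ with $W$ weakly compact. Thus $X$ is WCG.

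The only genuinely nontrivial ingredient is the structural fact that closed ideals coincide with bands in order continuous Banach lattices, together with the implication that the solid-hull hypothesis entails order continuity; both are classical and the latter is quoted in the text. Once the identity $I(K)=B(K)$ is in hand, the passage to WCG is immediate from the description of $I(K)$ as the closed span of $\sol(K)$ in \eqref{eqn:I} and the very hypothesis of the theorem applied to $K$, so I do not expect any further obstacle.
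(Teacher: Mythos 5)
Your proposal is correct and follows essentially the same route as the paper's proof: order continuity (forced by the solid-hull hypothesis, as noted before the statement) makes every closed ideal a band, so $B(K)=I(K)$, and then $X=I(K)=\overline{\spn}(\sol(K))$ together with the weak relative compactness of $\sol(K)$ yields WCG. Your only additions are spelled-out details (the inclusion argument for $I(K)=B(K)$ and passing to the weak closure of $\sol(K)$) that the paper leaves implicit.
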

\begin{proof}
Since $X$ is order continuous, every ideal of~$X$ is a band
(see e.g. \cite[Corollary~2.4.4]{mey2}) and so $X$ is BWCG if and only if it is IWCG.
Let $K \subset X$ be a weakly compact set such that $X=I(K)$. Then $\sol(K)$ is weakly relatively compact and
$$
	X=I(K)\stackrel{\eqref{eqn:I}}{=}\overline{\spn}(\sol(K)),
$$ 
hence $X$ is WCG.
\end{proof}

It is clear that the discussion of this paper is only meaningful for non-separable Banach lattices. 
However, for Banach lattices with a separable predual we have some reformulations of the lattice versions of WCG, see
Corollary~\ref{pro:separablepredual} below. Recall first that a positive element $u$ 
of a Banach lattice~$X$ is said to be:
\begin{itemize}
\item a {\em quasi-interior point} of~$X$ if for every $x\in X_+$ we have $\|x-x\wedge nu\|\rightarrow 0$ as $n\to \infty$ 
or, equivalently, if $I(\{u\})=X$
(cf. \cite[Theorem~4.85]{ali-bur}); 
\item a {\em weak order unit} of~$X$ if $\{u\}^\perp=\{0\}$ or, equivalently, if $B(\{u\})=X$.
\end{itemize}  
In particular, every Banach lattice having a quasi-interior point (resp. weak order unit) is IWCG (resp. BWCG).

\begin{proposition}\label{proposition:GeneratedBySeparable}
Let $X$ be a Banach lattice. Then $X$ has a quasi-interior point (resp. weak order unit)
if and only if $X=I(C)$ (resp. $X=B(C)$) for some separable set $C \subset X$.
\end{proposition}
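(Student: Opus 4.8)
The plan is to treat both statements in parallel and to observe that the forward implications are essentially a matter of definition: if $u$ is a quasi-interior point, then $X=I(\{u\})$ with $C=\{u\}$ a (trivially separable) singleton, and likewise a weak order unit $u$ gives $X=B(\{u\})$. So the real content lies in the converse direction, where from a separable generating set one must manufacture a \emph{single} generator that is a quasi-interior point (resp.\ weak order unit).

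For the converse, I would suppose $X=I(C)$ (resp.\ $X=B(C)$) with $C$ separable, and fix a countable dense subset $\{c_n:n\in\N\}$ of $C$. The key construction is to absorb all the $c_n$ into one positive element by setting
\[
 u:=\sum_{n\in\N}\frac{|c_n|}{2^n(1+\|c_n\|)}.
\]
Since the $n$-th summand has norm at most $2^{-n}$, the series converges in $X$; as each partial sum is positive and the positive cone is closed, $u\in X_+$, and moreover $u$ dominates each individual summand, so that $|c_n|\le 2^n(1+\|c_n\|)\,u$ for every $n$.

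This domination is the crux. It shows that each $c_n$ belongs to the principal ideal $\{x\in X:|x|\le\lambda u\ \text{for some }\lambda\ge0\}$ generated by $u$, and hence to the closed ideal $I(\{u\})$; since $B(\{u\})$ is also a closed ideal containing $u$, the same inclusion $c_n\in B(\{u\})$ holds in the band case. As $I(\{u\})$ and $B(\{u\})$ are closed and $\{c_n\}$ is dense in $C$, one gets $C\subseteq I(\{u\})$ (resp.\ $C\subseteq B(\{u\})$), whence $X=I(C)\subseteq I(\{u\})$ (resp.\ $X=B(C)\subseteq B(\{u\})$) by minimality of $I(C)$ (resp.\ $B(C)$). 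This forces $I(\{u\})=X$ (resp.\ $B(\{u\})=X$), which by the characterizations recalled just before the statement means exactly that $u$ is a quasi-interior point (resp.\ a weak order unit).

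The only point requiring genuine care is the interplay between ideals and bands: the domination $|c_n|\le\lambda_n u$ only directly places $c_n$ in the ideal generated by $u$, so for the band version I rely on the fact that every band is in particular a closed ideal, which is precisely what lets a single argument cover both cases. Everything else---absolute convergence of the series, positivity of $u$, the principal-ideal description, and the passage to the closure via density of $\{c_n\}$ in $C$---is routine.
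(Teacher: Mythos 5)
Your proof is correct and takes essentially the same route as the paper's: both build a single positive element $u=\sum_{n}2^{-n}|c_n|$ (suitably normalized) from a dense sequence in $C$, observe that each $c_n$ lies in $I(\{u\})\subseteq B(\{u\})$, and conclude $X=I(C)\subseteq I(\{u\})$ (resp.\ $X=B(C)\subseteq B(\{u\})$) by closedness and minimality. The only difference is cosmetic: the paper assumes $C$ bounded without loss of generality, whereas you normalize each term by $1+\|c_n\|$.
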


\begin{proof}
It suffices to prove the ``if'' parts. We can assume that $C$ is norm bounded.
Let $(x_n)_{n\in\mathbb N}$ be a dense sequence in~$C$ and define
$$
	u:=\sum_{n\in\mathbb N}\frac{|x_n|}{2^n} \in X_+.
$$
Since $x_n\in I(\{u\}) \subset B(\{u\})$ for all $n\in \N$, we have 
$$
	I(C) \subset I(\{u\})
	\quad\mbox{and} \quad
	B(C) \subset B(\{u\}).
$$
So, $u$ is a quasi-interior point (resp. weak order unit) of~$X$ whenever $X=I(C)$ (resp. $X=B(C)$).
\end{proof}

The {\em density character} of a topological space~$T$, denoted by~${\rm dens}(T)$,
is the minimal cardinality of a dense subset of~$T$. For an arbitrary Banach space~$X$ we have $\dens(X) \geq \dens(X^\ast,w^\ast)$
(see e.g. \cite[p.~576]{fab-ultimo}), while the equality 
$$
	\dens(X)=\dens(X^\ast,w^\ast)
$$
holds whenever $X$ is WCG (see e.g. \cite[Theorem~13.3]{fab-ultimo}). We next show that this equality holds
for any LWCG Banach lattice.

\begin{theorem}\label{dens LWCG}
Let $X$ be an LWCG Banach lattice. Then $\dens(X) = \dens(X^\ast,w^\ast)$.
\end{theorem}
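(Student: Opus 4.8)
Since the inequality $\dens(X)\ge\dens(X^\ast,w^\ast)$ holds for every Banach space, the plan is to establish only the reverse inequality. Write $\kappa:=\dens(X^\ast,w^\ast)$ and assume $X$ is infinite-dimensional (otherwise there is nothing to prove, and this assumption guarantees that $\kappa$ is infinite, since a finite $w^\ast$-dense subset of $X^\ast$ would force $X$ to be finite-dimensional). Fix a weakly compact set $K\subset X$ with $L(K)=X$. The strategy is to reduce the statement to the classical WCG case by passing to the closed linear span of $K$, and then to control the effect of the lattice operations on the density character.

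First I would set $Y:=\overline{\spn}(K)$. As $K$ is weakly compact and its closed linear span is exactly $Y$, the space $Y$ is WCG, so the equality $\dens(Y)=\dens(Y^\ast,w^\ast)$ holds by \cite[Theorem~13.3]{fab-ultimo}. Next I would compare the two $w^\ast$-densities: since $Y$ is a closed subspace of $X$, the restriction operator $X^\ast\to Y^\ast$ is $w^\ast$-to-$w^\ast$ continuous and, by Hahn--Banach, onto; hence it maps a $w^\ast$-dense subset of $X^\ast$ of cardinality $\kappa$ onto a $w^\ast$-dense subset of $Y^\ast$. This yields $\dens(Y^\ast,w^\ast)\le\kappa$, and therefore $\dens(Y)\le\kappa$.

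To finish, I would choose a norm-dense subset $D\subset Y$ with $|D|\le\kappa$ and form $D^{\vee\wedge}$, the family of all elements obtained from $D$ by the finite lattice operations as in the definition of $A^{\vee\wedge}$. Because $\kappa$ is infinite, $|D^{\vee\wedge}|\le\kappa$. Since the lattice operations $\vee$ and $\wedge$ are jointly norm-continuous and $D$ is norm-dense in $Y\supseteq\spn(K)$, a routine approximation argument shows that $D^{\vee\wedge}$ is norm-dense in $\spn(K)^{\vee\wedge}$. Invoking \eqref{eqn:L},
$$
	X=L(K)=\overline{\spn(K)^{\vee\wedge}}=\overline{D^{\vee\wedge}},
$$
so $\dens(X)\le|D^{\vee\wedge}|\le\kappa$, which together with the general inequality gives the desired equality.

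The conceptual heart of the argument is the passage through the lattice operations: generating $X$ as a lattice rather than as a linear space forces us through $\vee$ and $\wedge$, and the point that makes the reduction succeed is that these operations are norm-continuous and applied only finitely many times, so they do not enlarge the density character of a generating set. I expect this control of $D^{\vee\wedge}$ (combined with the reduction to the WCG subspace $Y$) to be the main point to get right, whereas the comparison of the $w^\ast$-densities of $X^\ast$ and $Y^\ast$ is entirely standard.
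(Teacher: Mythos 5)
Your proposal is correct and follows essentially the same route as the paper's proof: reduce to the WCG subspace $Y=\overline{\spn}(K)$, use $\dens(Y)=\dens(Y^\ast,w^\ast)$ together with the $w^\ast$-continuous onto restriction operator $X^\ast\to Y^\ast$, and control $\dens(X)$ via the identity $X=\overline{\spn(K)^{\vee\wedge}}$. The only difference is that you spell out explicitly (via the set $D^{\vee\wedge}$ and norm-continuity of the lattice operations) the step the paper states compactly as ``$Y^{\vee\wedge}$ is dense in $X$, hence $\dens(Y)=\dens(X)$.''
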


\begin{proof}
It suffices to prove that $\dens(X) \leq \dens(X^\ast,w^\ast)$.
Let $K \subset X$ be a weakly compact set such that 
$$
	X=L(K)\stackrel{\eqref{eqn:L}}{=}\overline{\spn(K)^{\vee\wedge}}.
$$
Let us consider the WCG subspace $Y:=\overline{\spn}(K) \subset X$. 
According to the comments preceding the theorem, $\dens(Y) = \dens(Y^\ast,w^\ast)$.
Since $Y^{\vee\wedge}$ is dense in~$X$, we have
$\dens(Y)=\dens(X)$. Moreover, since the restriction operator $X^\ast\rightarrow Y^\ast$ is $w^\ast$-$w^\ast$-continuous 
and onto, we have $\dens(Y^\ast,w^\ast)\leq \dens(X^\ast,w^\ast)$.
It follows that $\dens(X) \leq \dens(X^\ast,w^\ast)$, as required.
\end{proof}

\begin{corollary}\label{pro:separablepredual}
Let $X$ be a Banach lattice such that $X^*$ is $w^*$-separable (e.g. $X=Y^*$ for a separable Banach lattice~$Y$). 
\begin{enumerate}
\item[(i)] $X$ is LWCG if and only if $X$ is separable.
\item[(ii)] $X$ is IWCG if and only if $X$ has a quasi-interior point.
\item[(iii)] $X$ is BWCG if and only if $X$ has a weak order unit.
\end{enumerate}
\end{corollary}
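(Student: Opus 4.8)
The plan is to dispose of (i) on its own and then handle the nontrivial implications of (ii) and (iii) by a single argument that replaces the weakly compact generator by a \emph{separable} generating set, so that Proposition~\ref{proposition:GeneratedBySeparable} applies directly.

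For (i), the ``if'' direction is standard: a separable Banach space is WCG (take a norm-dense sequence $(x_n)$ in the unit ball and observe that $\{x_n/n:n\in\N\}\cup\{0\}$ is norm compact with dense linear span), and WCG trivially implies LWCG. For the converse I would simply invoke Theorem~\ref{dens LWCG}: if $X$ is LWCG then $\dens(X)=\dens(X^*,w^*)=\aleph_0$, so $X$ is separable.

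For (ii) and (iii) the ``if'' directions are already recorded in the comments preceding Proposition~\ref{proposition:GeneratedBySeparable}, since a quasi-interior point $u$ (resp.\ weak order unit $u$) gives $X=I(\{u\})$ (resp.\ $X=B(\{u\})$) with the norm-compact, hence weakly compact, singleton $\{u\}$. The content lies in the two ``only if'' directions. There I would start from a weakly compact set $K$ with $X=I(K)$ (resp.\ $X=B(K)$); being weakly compact, $K$ is norm bounded. The crucial point is that, because $X^*$ is $w^*$-separable, the weak topology is metrizable on bounded subsets of $X$, so $(K,w)$ is a compact metrizable, hence separable, space. Fixing a countable weakly dense set $D\subset K$ and using that $\overline{\spn}(D)$ is norm closed, and therefore weakly closed, one gets that $\overline{\spn}(D)$ contains the weak closure of $D$, whence $K\subset\overline{\spn}(D)\subset I(D)\subset B(D)$. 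Consequently $X=I(K)\subset I(D)$ (resp.\ $X=B(K)\subset B(D)$), so that $D$ is a countable, in particular separable, set with $I(D)=X$ (resp.\ $B(D)=X$), and Proposition~\ref{proposition:GeneratedBySeparable} yields the desired quasi-interior point (resp.\ weak order unit).

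The step I expect to carry the whole argument is the passage from ``weakly compact'' to ``separable'': everything reduces to the standard fact that $w^*$-separability of $X^*$ forces the weak topology on bounded sets to be metrizable, after which compactness of $K$ supplies a countable weakly dense subset whose closed linear span still traps $K$. Once $K$ lies inside a separable closed subspace, the lattice-theoretic bookkeeping is immediate, so no genuine lattice obstacle arises beyond correctly invoking Proposition~\ref{proposition:GeneratedBySeparable}.
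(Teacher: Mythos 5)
Your overall strategy coincides with the paper's: part (i) via Theorem~\ref{dens LWCG}, and parts (ii)--(iii) by showing that the weakly compact generator lives in a separable set and then invoking Proposition~\ref{proposition:GeneratedBySeparable}. However, the justification of your key step is wrong as stated. You claim that $w^*$-separability of $X^*$ makes the weak topology metrizable \emph{on bounded subsets} of $X$. That is false: metrizability of $(B_X,w)$ is equivalent to \emph{norm} separability of $X^*$, and a $w^*$-separable dual can be very far from norm separable. The space $\ell_\infty$ --- one of the motivating examples for this very corollary (see Example~\ref{example:Eberlein}) --- is a counterexample: $\ell_1$ separates the points of $\ell_\infty$, hence is $w^*$-dense in $\ell_\infty^*$, so $\ell_\infty^*$ is $w^*$-separable; yet $\ell_\infty^*$ is not norm separable, and therefore $(B_{\ell_\infty},w)$ is not metrizable.

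What is true, and what your argument actually needs, is metrizability of the weak topology on the weakly \emph{compact} set $K$, and for this compactness is essential. Fix a countable $w^*$-dense set $D=\{x_n^*\}_{n\in\N}$ in $X^*$; since for $x\neq 0$ the $w^*$-continuous map $x^*\mapsto x^*(x)$ is not identically zero, $D$ separates the points of~$X$. The topology $\tau_D$ of pointwise convergence on $D$ is then Hausdorff and metrizable, and coarser than the weak topology; since $K$ is weakly compact, the identity $(K,w)\to(K,\tau_D)$ is a continuous bijection from a compact space onto a Hausdorff space, hence a homeomorphism. Thus $(K,w)$ is compact metrizable, hence separable, and the remainder of your argument (a countable weakly dense $D_0\subset K$ satisfies $K\subset\overline{\spn}(D_0)$ because norm-closed subspaces are weakly closed, so $X=I(D_0)$, resp.\ $X=B(D_0)$) goes through and lands in Proposition~\ref{proposition:GeneratedBySeparable}, exactly as in the paper's proof, which simply records that every weakly compact subset of such an $X$ is separable. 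So the defect is a wrongly cited standard fact rather than a wrong approach; replacing ``bounded'' by ``weakly compact'' and supplying the compactness argument repairs the proof.
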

\begin{proof}
(i) is an immediate consequence of Theorem~\ref{dens LWCG}.
Since any weakly compact subset of~$X$ is separable, (ii) and (iii) follow from Proposition~\ref{proposition:GeneratedBySeparable}.
\end{proof}

The following illustrates the difference between BWCG and IWCG.

\begin{example}\label{example:Lorentz}
For $1<p<\infty$ the Lorentz space $L_{p,\infty}[0,1]$ is BWCG but not IWCG.
\end{example}

\begin{proof}
Recall that, for $1<p<\infty$, the Lorentz space $L_{p,\infty}[0,1]$ consists of those 
(equivalence classes of) measurable functions $f:[0,1]\rightarrow\mathbb R$ for which
$$
	\|f\|_{p,\infty}:=\sup_{t>0}t\lambda(\{x\in[0,1]: \, |f(x)|>t\})^{1/p}<\infty,
$$
where $\lambda$ denotes the Lebesgue measure on $[0,1]$.
Although the expression $\|f\|_{p,\infty}$ just defines a lattice quasi-norm, 
it is actually equivalent to a lattice norm (cf. \cite[p. 219, Lemma 4.5 and Theorem 4.6]{ben-sha}).

It is clear that $L_{p,\infty}[0,1]$ is BWCG since $\chi_{[0,1]}$ is a weak order unit of it. 
On the other hand, it is well known that $L_{p,\infty}[0,1]$ is the dual of a separable Banach lattice, namely,
the Lorentz space $L_{p',1}[0,1]$ with $\frac1p+\frac{1}{p'}=1$ (cf. \cite[p.~220, Theorem 4.7]{ben-sha}). 
Therefore, in order to prove that $L_{p,\infty}[0,1]$ is not IWCG it suffices to check that it has no
quasi-interior point (Corollary~\ref{pro:separablepredual}). Although this is probably known to any expert in the field,
we include a proof since we did not find a suitable reference for it.

Our proof is by contradiction. Suppose $L_{p,\infty}[0,1]$ has a quasi-interior point, say~$v$. 
Let us consider $f_0\in L_{p,\infty}[0,1]$ defined by $f_0(x):=\frac{1}{x^{1/p}}$ for $x\in[0,1]$. 
Observe that $\lambda(\{x\in[0,1]:f_0(x)>t\})=1/t^p$ for every $t>0$ and so $\|f_0\|_{p,\infty}=1$.
Set 
$$
	u:=\frac{v+f_0}{\|v+f_0\|_{p,\infty}}\in L_{p,\infty}[0,1].
$$
Clearly, $u$ is a quasi-interior point of~$L_{p,\infty}[0,1]$. Note that for any $t>0$ we have
$$
	\{x\in [0,1]: \, f_0(x) > t \|v+f_0\|_{p,\infty}\} \subset
	\{x\in [0,1]: \, u(x) > t \}
$$
and so, bearing in mind that $\|u\|_{p,\infty}=1$, we get 
$$
	\frac{1}{(t\|v+f_0\|_{p,\infty})^{p}} \leq 
	\lambda(\{x\in [0,1]: \, u(x) > t \}) \leq \frac{1}{t^{p}}.
$$
In view the previous inequalities, we can choose $t_0>0$ large enough such that
$$
	0<\lambda(\{x\in[0,1]: \, u(x)>t_0\})<1.
$$

Let $A_0:=\{x\in[0,1]:u(x)\leq t_0\}$, $A_1:=[0,1]\setminus A_0$ and $r_0:=\lambda(A_0)\in(0,1)$. 
There exists a measure-preserving transformation $\sigma:[0,1]\rightarrow[0,1]$ in such a way that $\sigma(A_0)=[0,r_0]$
and $\sigma(A_1)=[r_0,1]$ (see e.g. \cite[p.~81, Proposition~7.4]{ben-sha}).
Define $f_\sigma:=f\circ \sigma \in L_{p,\infty}[0,1]$. We claim that
\begin{equation}\label{eqn:notapprox}
	\|f_\sigma-f_\sigma\wedge Nu\|_{p,\infty}=1
\end{equation}
for every $N\in \mathbb N$. This would imply that $u$ cannot be a quasi-interior point, a contradiction.

In order to prove \eqref{eqn:notapprox}, note first that, since $\|f_\sigma\|_{p,\infty}=1$ and $u>0$, 
we have $\|f_\sigma-f_\sigma\wedge Nu\|_{p,\infty}\leq1$. For the converse inequality, fix $\varepsilon>0$ and choose $t>0$ large enough
such that 
$$
	\frac{1}{(t+Nt_0)^p} \leq r_0 \quad \mbox{and} \quad \frac{t}{t+Nt_0}\geq 1-\varepsilon.
$$
Define
$$
	B:=\sigma^{-1}\Big(\big[0,\frac{1}{(t+Nt_0)^p}\big)\Big)\subset A_0
$$
and note that for every $x\in B$ we have 
$$
	f_\sigma(x) >t+Nt_0 \geq t+Nu(x),
$$
hence $(f_\sigma \wedge Nu)(x)=Nu(x)$ and so $f_\sigma(x)-(f_\sigma \wedge Nu)(x)>t$.
It follows that
\begin{align*}
	\|f_\sigma-f_\sigma\wedge Nu\|_{p,\infty} 	&\geq t\lambda(\{x\in[0,1]: \, f_\sigma(x)-(f_\sigma\wedge Nu)(x)>t\})^{1/p} \\
	&\geq t\lambda(B)^{1/p}=\frac{t}{t+Nt_0}\geq 1-\varepsilon.
\end{align*}
As $\varepsilon>0$ is arbitrary, \eqref{eqn:notapprox} holds and the proof is complete.
\end{proof}

\section{Order continuous Banach lattices}\label{section:OC}

The next result provides an improvement of Theorem~\ref{pro:solidhull}.

\begin{theorem}\label{t:order cont IWCG=WCG}
Let $X$ be an order continuous Banach lattice. Then $X$ is BWCG if and only if it is WCG.
\end{theorem}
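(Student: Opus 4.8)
The implication from WCG to BWCG is immediate: the chain $\text{WCG}\Rightarrow\text{LWCG}\Rightarrow\text{IWCG}\Rightarrow\text{BWCG}$ recorded after Definition~\ref{definition:WCGs} gives it for free, so the whole content is the converse, and this is where order continuity must be used. My plan rests on three standard features of an order continuous Banach lattice: every band is a projection band (so norm-one band projections are available), the norm closure of an ideal equals the band it generates, and every order interval is weakly compact.

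First I would dispose of the case of a weak order unit. If $u\in X_+$ is a weak order unit, then $[-u,u]$ is weakly compact by order continuity, while $\overline{\spn}([-u,u])=\overline{I(\{u\})}=B(\{u\})=X$, the middle equality being the fact that the closure of an ideal is the band it generates. Thus a single weakly compact set generates $X$ linearly and $X$ is WCG. Combining this with Proposition~\ref{proposition:GeneratedBySeparable}, the same conclusion holds whenever a Banach lattice is the band generated by one of its separable subsets: such a lattice has a weak order unit, hence is WCG. This is the engine that makes all the \emph{local} pieces WCG.

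For the general case, fix a weakly compact $K$ with $B(K)=X$ and put $Y:=\overline{\spn}(K)$, a WCG subspace of $X$ with $B(Y)=X$. The idea is to transport a projectional resolution of the identity $(Q_\alpha)_{\omega\le\alpha\le\mu}$ of the WCG space $Y$ (Amir--Lindenstrauss) to $X$ by replacing each range $Q_\alpha Y$ with the band $X_\alpha:=B(Q_\alpha Y)$ and taking the associated band projections $P_\alpha$. One then checks that $(P_\alpha)$ is a projectional resolution of the identity on $X$: the norms are one, nested bands give $P_\alpha P_\beta=P_{\min(\alpha,\beta)}$, continuity at limit ordinals follows from $\overline{\bigcup_{\beta<\alpha}X_\beta}=X_\alpha$ (both sides being the band generated by $\bigcup_{\beta<\alpha}Q_\beta Y$), and $X_\mu=B(Y)=X$ at the top. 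Each increment $(P_{\alpha+1}-P_\alpha)X$ is the band generated by the separable set $(P_{\alpha+1}-P_\alpha)\big((Q_{\alpha+1}-Q_\alpha)Y\big)$, hence, by the previous paragraph, it has a weak order unit and is WCG. Since a Banach space admitting a projectional resolution of the identity with WCG increments is itself WCG, this would finish the proof.

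The main obstacle is the transfer step, namely verifying that the band-theoretic family $(P_\alpha)$ genuinely inherits the defining properties of a projectional resolution of the identity — especially the limit-ordinal continuity $\overline{\bigcup_{\beta<\alpha}X_\beta}=X_\alpha$ and the identification of the increments as bands generated by separable sets — which is exactly where order continuity (projection bands, closure of an ideal equals the generated band) is indispensable, and where the length of the resolution must be matched with the density bookkeeping underlying Theorem~\ref{dens LWCG}. A more hands-on alternative decomposes $X$ along a maximal disjoint family $(u_i)_{i\in I}$ into bands $X_i=B(\{u_i\})$, each WCG via $[-u_i,u_i]$, and tries to assemble $\{0\}\cup\bigcup_i\lambda_i[-u_i,u_i]$ into one weakly compact set for suitable weights $\lambda_i>0$; here weak compactness is tested through Eberlein--\v{S}mulian, and the difficulty concentrates on disjoint sequences drawn from distinct bands, which may fail to converge weakly (as the $\ell^1$-type behaviour inside $L^1$ shows). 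It is precisely the BWCG witness $K$, a weakly compact set meeting every band, that must be used to preclude an uncountable family of $\ell^1$-like bands and thereby force the assembled set to be weakly compact.
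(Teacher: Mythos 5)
Both of your routes to the nontrivial implication contain the same gap, and in neither case do you close it. In your main route, the assembly principle you invoke --- ``a Banach space admitting a projectional resolution of the identity with WCG increments is itself WCG'' --- is simply false. The space $C[0,\omega_1]$ admits the PRI $(P_\alpha)_{\omega\le\alpha\le\omega_1}$ given by $P_\alpha f(\gamma)=f(\min\{\gamma,\alpha\})$, whose increments are one-dimensional (spanned by the characteristic functions of the clopen sets $[\alpha+1,\omega_1]$), yet $C[0,\omega_1]$ is not WCG (Example~\ref{example:Eberlein}). So even if your band-theoretic family $(P_\alpha)$ were verified to be a PRI on $X$, no conclusion about $X$ being WCG would follow from the increments being WCG. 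There are further technical problems with the transfer itself: the increments $(Q_{\alpha+1}-Q_\alpha)Y$ of a PRI have density up to $|\alpha|$ and need not be separable, and in an order continuous lattice the band generated by a set of density $\le|\alpha|$ can have strictly larger density (in $L^1(\{0,1\}^{\omega_1})$ the band generated by the single element $\mathbf{1}$ is the whole nonseparable space), so condition (iii) of a PRI can fail for $(P_\alpha)$. But the fatal point is the false assembly principle: WCG does not pass from the pieces of a long sequence of projections to the whole space, not even when the pieces are one-dimensional.

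Your ``hands-on alternative'' is in fact the paper's route, but you stop exactly where the work lies: you observe that disjoint sequences drawn from distinct bands may fail to be weakly null and assert that the witness $K$ ``must be used'' to preclude this, without saying how. The paper's resolution is to choose the maximal disjoint family $A$ not arbitrarily in $X_+$ but inside $\sol(K)_+$, after first using order continuity (closed ideals are bands) to upgrade $X=B(K)$ to $X=I(K)=\overline{\spn}(\sol(K))$. Two lemmas then finish the proof. First (Lemma~\ref{lem:ocW}): for any pairwise disjoint $A\subset\sol(K)$ with $K$ weakly precompact, the set $\sol(A)$ is already weakly compact --- no weights $\lambda_i$ are needed --- because a sequence in $\sol(A)$ either has a subsequence inside a single order interval $[-|x|,|x|]$ (weakly compact by order continuity), or has a disjoint subsequence lying in $\sol(K)$, and disjoint sequences in the solid hull of a weakly precompact set are weakly null. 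Second (Lemma~\ref{lem:ocSPAN}): if $A$ is a maximal disjoint family in $C_+$ for a solid set $C$, then $C\subset I(A)$; this is proved via the band projections $P_x$, $x\in A$, and the unconditional convergence of $\sum_{x\in A}P_x(z)$ to $z$. Applying this with $C=\sol(K)$ gives $X=\overline{\spn}(\sol(A))$ with $\sol(A)$ weakly compact, hence $X$ is WCG. Anchoring the disjoint family inside $\sol(K)$ is precisely what rules out the $\ell_1$-type behaviour you worry about (and it must: $\ell_1(\omega_1)$ is order continuous with an uncountable disjoint family of atoms, and is not BWCG). As written, your proposal identifies the obstacle but does not overcome it, so it does not constitute a proof.
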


In order to prove Theorem~\ref{t:order cont IWCG=WCG} we need two lemmas. 
Recall that a subset $K$ of a Banach space is called {\em weakly precompact} 
(or conditionally weakly compact) if every sequence in~$K$ has a weakly Cauchy subsequence. 
Thanks to Rosenthal's $\ell_1$-theorem (see e.g. \cite[Theorem~5.37]{fab-ultimo}), this is equivalent
to saying that $K$ is bounded and contains no sequence equivalent to the usual basis of~$\ell_1$.

\begin{lemma}\label{lem:ocW}
Let $X$ be an order continuous Banach lattice, $K \subset X$ a weakly precompact set and 
$A\subset \sol(K)$ a set of pairwise disjoint vectors. Then $\sol(A)$ is weakly compact.
\end{lemma}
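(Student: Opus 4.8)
The plan is to prove weak compactness of $\sol(A)$ through the sequential characterization provided by the Eberlein--\v{S}mulian theorem: it suffices to show that every sequence in $\sol(A)$ admits a subsequence converging weakly to a point of $\sol(A)$. We may assume $A\neq\emptyset$, so that $0\in\sol(A)$. So fix a sequence $(y_n)$ in $\sol(A)$ and, for each $n$, choose $a_n\in A$ with $|y_n|\le|a_n|$. I would then split into two cases according to the behaviour of $(a_n)$; the disjointness of $A$ is precisely what will make the second case tractable.

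First I would treat the case in which some fixed $a\in A$ satisfies $y_n\in[-|a|,|a|]$ for infinitely many $n$. Here I invoke order continuity: in an order continuous Banach lattice every order interval $[-|a|,|a|]$ is weakly compact (this follows since $X$ is an ideal in $X^{\ast\ast}$, so that $[-|a|,|a|]$ coincides with the corresponding $w^\ast$-compact order interval of $X^{\ast\ast}$ and is therefore weakly compact in $X$). Consequently the relevant subsequence, lying in this weakly compact interval, has a further subsequence converging weakly to a point of $[-|a|,|a|]\subset\sol(A)$, as required.

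In the remaining case, after passing to a subsequence the $a_n$ are pairwise distinct, and since $A$ consists of pairwise disjoint vectors we get $|y_i|\wedge|y_j|\le|a_i|\wedge|a_j|=0$ for $i\neq j$; that is, $(y_n)$ is a disjoint sequence. The goal becomes to show that a disjoint sequence $(y_n)\subset\sol(K)$ is weakly null (and $0\in\sol(A)$). I would do this in two steps. The clean step is that a \emph{weakly Cauchy} disjoint sequence is automatically weakly null: it suffices to check $x^\ast(|y_n|)\to 0$ for every $x^\ast\ge 0$, since then $|x^\ast(y_n)|\le|x^\ast|(|y_n|)\to 0$ for arbitrary $x^\ast$; and if instead $x^\ast(|y_n|)\ge\delta>0$ along a subsequence, disjointness gives $|\sum_n\alpha_n y_n|=\sum_n|\alpha_n|\,|y_n|$ for finitely supported scalars, whence $\|\sum_n\alpha_n y_n\|\ge x^\ast(\sum_n|\alpha_n|\,|y_n|)\ge\delta\sum_n|\alpha_n|$ while $\|\sum_n\alpha_n y_n\|\le(\sup_n\|y_n\|)\sum_n|\alpha_n|$, so that $(y_n)$ is equivalent to the unit vector basis of $\ell_1$, which has no weakly Cauchy subsequence ($\ell_1$ being weakly sequentially complete with the Schur property); this contradicts weak Cauchyness.

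It therefore remains to extract a weakly Cauchy subsequence of the disjoint sequence $(y_n)\subset\sol(K)$, and this is the step I expect to be the main obstacle. By Rosenthal's $\ell_1$-theorem it is enough to show that disjoint sequences in $\sol(K)$ are weakly precompact, i.e. contain no $\ell_1$-subsequence. Writing $|y_n|\le|k_n|$ with $k_n\in K$, and letting $P_n$ be the band projection onto the principal band generated by $y_n$ (available because an order continuous Banach lattice is Dedekind complete), the elements $w_n:=P_n|k_n|$ are pairwise disjoint, satisfy $|y_n|\le w_n\le|k_n|$, and hence lie in $\sol(K)$; so an $\ell_1$-copy among the $(y_n)$ would, by the same disjointness computation, yield a disjoint $\ell_1$-sequence $(w_n)$ inside $\sol(K)$. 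The crux is thus to rule this out, i.e. to know that the solid hull of the weakly precompact set $K$ is again weakly precompact. This is a genuine domination phenomenon requiring order continuity (the analogue for weakly \emph{compact} sets fails in general, cf. Theorem~\ref{pro:solidhull} and the Chen--Wickstead characterization), and I would establish it by a Dodds--Fremlin-type argument that transfers the $\ell_1$-structure of a dominated disjoint sequence back to $K$ through the projections $P_n$, contradicting the weak precompactness of $K$. The sign bookkeeping in this transfer is the delicate point. Once weak precompactness of $\sol(K)$ is in hand, the two cases combine via Eberlein--\v{S}mulian to show that $\sol(A)$ is weakly compact.
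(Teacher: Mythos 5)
Your skeleton matches the paper's proof exactly: the same two-case split after choosing dominating elements of $A$, with Case~1 (a single $a\in A$ dominating a subsequence) settled by weak compactness of order intervals in order continuous Banach lattices. Case~1 is complete and correct. In Case~2, however, there is a genuine gap. Everything you actually prove there --- that a weakly Cauchy disjoint sequence is weakly null, the reduction via Rosenthal's $\ell_1$-theorem, the passage to the disjoint dominated vectors $w_n=P_n|k_n|$ --- is correct, but it only reformulates the problem: the statement you end up needing, namely that a disjoint sequence $(w_n)$ with $0\leq w_n\leq |k_n|$, $k_n\in K$, admits no subsequence equivalent to the $\ell_1$-basis, is the very statement Case~2 set out to establish, and you leave it as an announced ``Dodds--Fremlin-type argument'' whose ``sign bookkeeping'' you concede you have not carried out. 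That missing argument is the entire content of Case~2. The paper disposes of it by citing a known result: every disjoint sequence in the solid hull of a weakly precompact subset of a Banach lattice is weakly null \cite[Proposition~2.5.12(iii)]{mey2}. If you wanted to prove this from scratch, the key device is disjointification of \emph{functionals}, not vectors: from a positive $x^*$ with $x^*(w_n)\geq\delta$ along a subsequence, one constructs pairwise disjoint $\phi_n\in[0,x^*]$ with $\phi_n(w_n)$ bounded away from zero and $|\phi_n|$ negligible on the remaining $w_m$; testing suitably signed combinations of the $\phi_n$ against the dominating vectors $k_n$ then produces an $\ell_1$-subsequence inside $K$ itself, contradicting its weak precompactness. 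Your band projections $P_n$ live on the vector side only and cannot substitute for this step.

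Two further inaccuracies are worth flagging, though neither is what breaks the proof. First, you assert that the needed domination phenomenon ``requires order continuity''; it does not. The disjoint-sequence result just quoted holds in \emph{every} Banach lattice, and in this lemma order continuity is used only in Case~1, for weak compactness of order intervals. Second, what you identify as the crux --- that $\sol(K)$ itself is weakly precompact --- is strictly more than you need. Without order continuity it fails outright: in $C[0,1]$ the solid hull of the singleton $\{\chi_{[0,1]}\}$ is the whole unit ball, which contains $\ell_1$-sequences. Under order continuity it is in fact true, but the known route is through Ghoussoub--Johnson's theorem on lattices without copies of $C[0,1]$ \cite[Corollary~II.4]{gho-joh} (the result the paper invokes later, in Theorem~\ref{theorem:AsplundGenerated}), which is heavier machinery than the disjoint-sequence fact that suffices here.
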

\begin{proof}
Let $(y_n)_{n\in\mathbb N}$ be a sequence in~$\sol(A) \subset \sol(K)$. By passing to a further subsequence, not relabeled, we can assume
that one of the following cases holds.

\emph{Case~1}. There is $x\in A$ such that $y_n\in [-|x|,|x|]$ for all $n\in \N$. Since
every order interval of an order continuous Banach lattice is weakly compact
(see e.g. \cite[Theorem~2.4.2]{mey2}), $(y_n)_{n\in\mathbb N}$ admits a subsequence which is weakly convergent to some vector in $[-|x|,|x|]\subset \sol(A)$.

\emph{Case~2}. There is a sequence $(x_n)_{n\in\mathbb N}$ of distinct elements of~$A$ in such a way that $y_n\in [-|x_n|,|x_n|]$ for all $n\in \N$.
In particular, $(y_n)_{n\in\mathbb N}$ is a disjoint sequence. Since $K$ is weakly precompact and $y_n\in \sol(K)$
for all $n\in \N$, the sequence $(y_n)_{n\in\mathbb N}$ is weakly convergent to $0\in \sol(A)$ (see e.g. \cite[Proposition~2.5.12(iii)]{mey2}).

This proves that $\sol(A)$ is weakly compact.
\end{proof}

\begin{lemma}\label{lem:ocSPAN}
Let $X$ be an order continuous Banach lattice, $C \subset X$ a solid set and 
$A \subset C_+$ a maximal set of pairwise disjoint vectors. Then $C \subset I(A)$.
\end{lemma}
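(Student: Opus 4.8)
The plan is to reduce to nonnegative elements and then exploit the band structure that order continuity forces on $I(A)$. First I would observe that since $C$ is solid and $I(A)$ is an ideal, it suffices to prove $C_+\subseteq I(A)$: for $x\in C$ we have $|x|\in C_+$ by solidity, and if $|x|\in I(A)$ then $x\in I(A)$ because $I(A)$ is an ideal. Next I would recall that in an order continuous Banach lattice every (closed) ideal is a band (see e.g. \cite[Corollary~2.4.4]{mey2}), so that $I(A)=B(A)=A^{\perp\perp}$; moreover, being order continuous, $X$ is Dedekind complete, whence $I(A)$ is a \emph{projection} band. Let $P\colon X\to X$ be the associated band projection onto $I(A)$, which is positive and satisfies $0\le Px\le x$ for every $x\in X_+$.

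The core of the argument is then a maximality computation. Fix $x\in C_+$ and write $x=Px+y$ with $y:=x-Px$. By construction $Px\in I(A)$ and $y\in I(A)^{\perp}=A^{\perp}$, so $y\wedge a=0$ for every $a\in A$ (recall $y\ge 0$ and $a\ge 0$, so $|y|\wedge|a|=y\wedge a$). Since $0\le y\le x$ and $C$ is solid, we also have $y\in C_+$. Thus $y$ is a nonnegative element of $C$ that is disjoint from every member of $A$.

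Finally I would invoke maximality of $A$ to conclude $y=0$. If $y\neq 0$, then $y\notin A$ (any vector lying in both $A$ and $A^{\perp}$ is disjoint from itself and hence zero), and therefore $A\cup\{y\}$ would be a strictly larger pairwise disjoint subset of $C_+$, contradicting the maximality of $A$. Hence $y=0$ and $x=Px\in I(A)$. As $x\in C_+$ was arbitrary, $C_+\subseteq I(A)$, and by the first reduction $C\subseteq I(A)$. The step I expect to require the most care is checking that the band-orthogonal component $y$ genuinely returns to $C$ (this is precisely where solidity of $C$ enters, via $0\le y\le x$) and that it is disjoint from all of $A$, so that maximality can actually be applied; the band and projection facts are standard consequences of order continuity and can be quoted.
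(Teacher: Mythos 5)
Your proof is correct, but it takes a genuinely different route from the paper's. The paper follows the scheme of \cite[Proposition~1.a.9]{lin-tza-2}: for $z\in C_+$ it forms the \emph{principal} band projections $P_x$ onto $B(\{x\})$ for each $x\in A$, checks that $P_x(z)\in I(A)$ via the formula $P_x(z)=\lim_n\bigvee_{k=1}^n(z\wedge kx)$, shows that the (generally uncountable) sum $\sum_{x\in A}P_x(z)$ converges unconditionally to some $y\in[0,z]$, and only then uses the maximality of $A$ --- exactly as you do --- to conclude $z=y\in I(A)$. You instead collapse all of this into a single projection: order continuity makes every closed ideal a band (so $I(A)=B(A)=A^{\perp\perp}$) and makes $X$ Dedekind complete, hence $I(A)$ is a projection band; writing $x=Px+y$ with $y\in I(A)^{\perp}=A^{\perp}$, solidity of $C$ puts $y$ back into $C_+$ and maximality forces $y=0$. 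Your argument is shorter and sidesteps the unconditional convergence of the sum of components, for which the paper must invoke the proof (not just the statement) of \cite[Proposition~1.a.9]{lin-tza-2}; the facts you quote instead (closed ideals are bands under order continuity, bands in Dedekind complete lattices are projection bands) are standard, and the first is cited by the paper itself elsewhere. What the paper's route buys is an explicit constructive decomposition of each $z\in C_+$ as an unconditional sum of fragments lying in $\overline{\spn}(\sol(A))$, which is in the spirit of how $\sol(A)$ is subsequently used to generate the space, but this extra information is not needed for the lemma as stated. Both proofs use solidity of $C$ and maximality of $A$ in the same way at the final step, including the small but necessary observation that the leftover piece cannot itself belong to $A$ (a nonzero vector cannot be disjoint from itself).
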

\begin{proof}
We follow the ideas of \cite[Proposition 1.a.9]{lin-tza-2}. 
For each $x\in A$, let $P_x:X\to X$ be the band projection onto~$B(\{x\})$, so that
$$
	P_x(z)=\bigvee_{n\in\mathbb N}(z\wedge nx)=\lim_{n\to\infty} \bigvee_{k=1}^n(z\wedge kx)
$$
for all $z\in X_+$ (see e.g. \cite[pp. 8--10 and Proposition~1.a.8]{lin-tza-2}).

In order to see that $C \subset Y:=I(A)$ it is enough to prove that $C_+\subset Y$
(because $C$ is solid). To this end, pick $z\in C_+$. For every $x\in A$
we have $P_x(z)\in Y$ (bear in mind that $\bigvee_{k=1}^n(z\wedge kx)\in n\,\sol(A) \subset Y$ for all $n\in \N$)
and $0\leq P_x(z)\leq z$. Moreover, the sum $\sum_{x\in A} P_x(z)$ is unconditionally convergent to some $y \in [0,z]$ 
(see the proof of \cite[Proposition 1.a.9]{lin-tza-2}). We claim that $z=y$. Indeed, if this were not the case, 
then $z-y>0$ and, since $z-y\in C_+$ (bear in mind that $C$ is solid), by the maximality of $A$ there would be at least one $x\in A$ such that 
$x\wedge(z-y)\neq 0$. However, this is impossible since
$$
	0\leq x\wedge(z-y)\leq x\wedge(z-P_x(z))=0.
$$
Here, the last equality follows from the fact that $P_x$ is the band projection onto the band generated by~$x$. 
Hence, we have $z=y\in Y$. 
\end{proof}

\begin{proof}[Proof of Theorem~\ref{t:order cont IWCG=WCG}]
Suppose $X$ is BWCG. Since $X$ is order continuous, every ideal of~$X$ is a band
(see e.g. \cite[Corollary~2.4.4]{mey2}) and so $X$ is IWCG. 
Hence there is a weakly compact set $K\subset X$ such that 
$$
	X=I(K)\stackrel{\eqref{eqn:I}}{=}\overline{\spn}(\sol(K)).
$$ 
Fix a maximal set $A \subset \sol(K)_+$ of pairwise disjoint vectors.
By Lemma~\ref{lem:ocSPAN} (applied to $C:=\sol(K)$), we have $\sol(K) \subset I(A)=\overline{\spn}(\sol(A))$
and so $X=\overline{\spn}(\sol(A))$. Since $\sol(A)$ is weakly compact (by Lemma~\ref{lem:ocW}),
it follows that $X$ is WCG.
\end{proof}

\begin{remark}\label{remark:BWPG}
The proof of Theorem~\ref{t:order cont IWCG=WCG} makes clear that an order continuous Banach lattice~$X$ is WCG if and only if
there is a {\em weakly precompact} set $K \subset X$ such that $X=B(K)$. 
\end{remark}

Following~\cite[p.~28]{hay10}, a Banach space~$X$ is called {\em weakly precompactly generated} (WPG)
if there is a weakly precompact set $K \subset X$ such that $X=\overline{\spn}(K)$. 

\begin{corollary}\label{cor:ocWPG}
Let $X$ be an order continuous Banach lattice. Then $X$ is WCG if and only if it is WPG.
\end{corollary}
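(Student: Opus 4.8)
The plan is to obtain both implications directly from Remark~\ref{remark:BWPG}, with essentially all of the substantive work already carried out there. The forward implication WCG $\Rightarrow$ WPG requires no order continuity at all: if $X = \overline{\spn}(K)$ for a weakly compact set $K$, then $K$ is in particular weakly precompact, since every sequence in a weakly compact set has a weakly convergent, hence weakly Cauchy, subsequence. So the very same $K$ witnesses that $X$ is WPG.

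For the reverse implication WPG $\Rightarrow$ WCG, I would take a weakly precompact set $K \subset X$ with $X = \overline{\spn}(K)$ and use the inclusions $\overline{\spn}(K) \subset B(K) \subset X$ recorded just after Definition~\ref{definition:WCGs} to conclude that $X = B(K)$. Thus $K$ is a weakly precompact set generating $X$ as a band, and since $X$ is order continuous, Remark~\ref{remark:BWPG} yields at once that $X$ is WCG.

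The only step deserving attention --- and really the whole content of the statement --- is confirming that Remark~\ref{remark:BWPG} furnishes exactly this weakly precompact band-generation criterion. This is not a genuine obstacle: the proof of Theorem~\ref{t:order cont IWCG=WCG} uses weak compactness of the generating set only through Lemma~\ref{lem:ocW}, whose hypothesis already asks merely for $K$ to be weakly precompact, so the criterion does hold for weakly precompact generating sets, precisely as the remark asserts.
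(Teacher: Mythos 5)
Your proposal is correct and follows essentially the same route as the paper: the corollary is stated immediately after Remark~\ref{remark:BWPG} precisely because it is meant to be deduced from it via the trivial implication WCG $\Rightarrow$ WPG and the inclusion $\overline{\spn}(K)\subset B(K)$. Your verification that the remark itself is legitimate --- that the proof of Theorem~\ref{t:order cont IWCG=WCG} invokes weak compactness of the generating set only through Lemma~\ref{lem:ocW}, whose hypothesis is already stated for weakly precompact sets --- is exactly the justification the paper intends.
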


It is known that order continuous Banach lattices with order continuous dual are WCG, see~\cite[p.~194]{buk-alt}. 
We next provide another proof of this fact. For geometrical properties of this class
of Banach lattices, see \cite{gho}.

\begin{corollary}\label{cor:rusos}
Let $X$ be a Banach lattice. If $X$ and $X^*$ are order continuous, then $X$ is WCG.
\end{corollary}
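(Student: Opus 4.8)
The plan is to exhibit a weakly compact set $K\subset X$ with $B(K)=X$, and then invoke Theorem~\ref{t:order cont IWCG=WCG}, which applies precisely because $X$ is order continuous: once $X$ is shown to be BWCG, the theorem upgrades this to WCG. Thus the two hypotheses play distinct roles, order continuity of $X$ supplying the final conversion BWCG$\Rightarrow$WCG, and order continuity of $X^*$ supplying the weak compactness of the generating set.

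First I would fix, via Zorn's lemma, a maximal family $A\subset X_+$ of pairwise disjoint vectors, each normalized so that $\|a\|=1$. Maximality yields $A^\perp=\{0\}$: if some nonzero $x\in X_+$ satisfied $x\wedge a=0$ for all $a\in A$, then $A\cup\{x/\|x\|\}$ would be a strictly larger disjoint family, a contradiction; since $A^\perp=\{y\in X:\,|y|\wedge a=0 \textrm{ for all }a\in A\}$, applying this to $|y|$ forces $A^\perp=\{0\}$. By \eqref{eqn:B} we conclude $B(A)=A^{\perp\perp}=X$.

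The heart of the matter is to check that $A$ is relatively weakly compact, and this is where order continuity of $X^*$ enters. I would use the standard structural fact that $X^*$ is order continuous if and only if every norm-bounded disjoint sequence in $X_+$ is weakly null (see e.g. \cite{mey2}). Granting this, take any sequence in $A$: after passing to a subsequence it is either eventually constant, hence weakly convergent, or consists of distinct (and therefore pairwise disjoint) unit vectors of $X_+$, hence weakly null. By the Eberlein-Smulian theorem $A$ is relatively weakly compact, so $K:=\overline{A}^{\,w}$ is weakly compact. Since $A\subset K$ and $B(\cdot)$ is monotone with respect to inclusion, $X=B(A)\subset B(K)\subset X$, that is $X=B(K)$, so $X$ is BWCG; Theorem~\ref{t:order cont IWCG=WCG} then gives that $X$ is WCG.

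The only delicate point is the cited equivalence relating order continuity of the dual to weak nullity of bounded disjoint positive sequences; with that in hand the argument is short. Note that here $A$ turns out to be genuinely relatively weakly compact rather than merely weakly precompact, so no separate appeal to the weakly precompact version in Remark~\ref{remark:BWPG} (or to Corollary~\ref{cor:ocWPG}) is required, though either could serve as an alternative route.
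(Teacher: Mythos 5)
Your proof is correct, and it takes a genuinely different route from the paper's. The paper's argument is two lines: order continuity of $X$ and $X^*$ implies that the whole unit ball $B_X$ is weakly precompact (citing \cite[Theorem~4.25]{ali-bur}), hence $X$ is WPG, and Corollary~\ref{cor:ocWPG} (the weakly precompact version of Theorem~\ref{t:order cont IWCG=WCG}, recorded in Remark~\ref{remark:BWPG}) upgrades WPG to WCG. You instead construct an explicit generating set: a maximal pairwise disjoint family $A$ of normalized positive vectors, which satisfies $B(A)=A^{\perp\perp}=X$ by \eqref{eqn:B}, and which is relatively weakly compact because order continuity of $X^*$ is equivalent to every norm-bounded disjoint sequence in $X_+$ being weakly null; that equivalence, your one ``delicate point'', is indeed standard --- it belongs to the same circle of results as \cite[Theorem~4.69]{ali-bur}, quoted by the paper before Corollary~\ref{cor:dualWPG}, and can be found in \cite{mey2} --- so the step is safe, and your Eberlein-Smulian dichotomy (constant subsequence versus subsequence of distinct, hence disjoint, elements) is clean. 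Both proofs spend the order continuity of $X$ in the same place, namely inside Theorem~\ref{t:order cont IWCG=WCG}; the difference is where the order continuity of $X^*$ is spent. The paper spends it globally, on weak precompactness of $B_X$, which gives a shorter proof but requires passing through the WPG notion and the weakly precompact strengthening of the theorem; you spend it locally, on a disjoint family, which produces genuine weak compactness of the generating set and hence honest BWCG, at the cost of a Zorn's lemma construction. A side benefit of your version is that it exhibits concretely what a weakly compact band-generator of such a lattice looks like: a maximal disjoint system of unit vectors together with $0$.
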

\begin{proof}
The assumption implies that~$B_X$ is weakly precompact, see e.g. \cite[Theorem~4.25]{ali-bur}.
Hence $X$ is WPG and Corollary~\ref{cor:ocWPG} applies.
\end{proof}

Let us now turn the attention to the larger class of Dedekind complete (and $\sigma$-complete) Banach lattices.

\begin{theorem}\label{p:ell_infty}
Let $X$ be a Banach lattice and $Z \subset X$ a Dedekind complete sublattice. If $I(Z)$ is LWCG, then $Z$ is LWCG.
\end{theorem}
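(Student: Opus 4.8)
The plan is to deduce the statement from Proposition~\ref{pro:onto}(i). Concretely, if I can produce a bounded lattice homomorphism $P\colon I(Z)\to Z$ with dense range, then, choosing a weakly compact set $K\subset I(Z)$ with $L(K)=I(Z)$ (which exists since $I(Z)$ is LWCG), the set $P(K)$ is weakly compact in $Z$ and, as in the proof of Proposition~\ref{pro:onto}(i), $L(P(K))=\overline{P(L(K))}=\overline{P(I(Z))}=Z$; hence $Z$ is LWCG. So the whole problem reduces to constructing $P$, and the natural candidate is a \emph{lattice-homomorphism retraction}, i.e.\ a lattice homomorphism with $P|_Z=\mathrm{id}_Z$, which automatically has full (hence dense) range.

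To build $P$ I would first work on the (generally non-closed) ideal $J:=\spn(\sol(Z))$, whose closure is $I(Z)$ by \eqref{eqn:I}. The crucial structural fact is that $Z$ is \emph{majorizing} in $J$: every $x\in J$ is a finite combination $\sum_i c_i s_i$ with $s_i\in\sol(Z)$, $|s_i|\le|z_i|$ for suitable $z_i\in Z$, so $|x|\le\sum_i|c_i|\,|z_i|=:w\in Z_+$, the element $w$ lying in $Z$ because $Z$ is a sublattice. Since $Z$ is Dedekind complete, the identity $\mathrm{id}_Z$ is a Riesz homomorphism into a Dedekind complete space defined on a majorizing Riesz subspace of $J$; extending it first to a positive operator (Kantorovich, using the majorizing property) and then observing that the extension may be taken to be a Riesz homomorphism (Lipecki, since the map being extended is one), I obtain a lattice homomorphism $P_0\colon J\to Z$ with $P_0|_Z=\mathrm{id}_Z$.

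The hard part is passing from $J$ to $I(Z)$, i.e.\ making $P_0$ continuous, for then it extends uniquely to a bounded lattice homomorphism $P\colon I(Z)\to Z$ and the reduction above finishes the proof. Continuity is not automatic: one only controls $P_0$ by $|P_0x|=P_0|x|\le\inf\{w\in Z:\,w\ge|x|\}$, and the norm of this least $Z$-majorant can be arbitrarily large compared with $\|x\|$ when $Z$ sits inside $X$ in a ``skew'' fashion (already for a one-dimensional $Z$ spanned by a vector not aligned with the norm), so some abstract extensions are genuinely unbounded. What I would therefore prove is that one can \emph{select} a \emph{contractive} lattice-homomorphism retraction: among all lattice homomorphisms $I(Z)\to Z$ fixing $Z$ there is one of norm one. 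I expect this to be the real content of the argument, obtained through a norm-controlled version of the extension in which one exhausts $Z$ by a disjoint system and, on each disjoint ``coordinate,'' chooses the homomorphic component along which the relevant element of $Z$ attains its norm (possible precisely because $Z$ is Dedekind complete, so the required suprema, infima and band projections exist inside $Z$). Granting this norm-one retraction $P_0$, it extends by continuity to a bounded lattice homomorphism $P\colon I(Z)\to Z$ with $P|_Z=\mathrm{id}_Z$, and Proposition~\ref{pro:onto}(i) yields that $Z$ is LWCG.
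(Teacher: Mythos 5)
Your overall route is the paper's: reduce, via Proposition~\ref{pro:onto}(i), to producing a bounded lattice homomorphism from $I(Z)$ onto a dense sublattice of~$Z$, and build it by extending $\mathrm{id}_Z$ to the non-closed ideal $J=\spn(\sol(Z))$ generated by~$Z$ (where $Z$ is majorizing) using the Kantorovich--Lipecki extension --- this is exactly the Lipecki--Luxemburg--Schep theorem that the paper invokes. The paper then passes from $J$ to $I(Z)=\overline{J}$ with the words ``by density''; you correctly observe that this step requires the extension $P_0$ to be norm bounded, and that boundedness of such an extension is not automatic. Up to that point your account is accurate and coincides with the published argument.

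The genuine gap is in what you do next: the lemma your proof hinges on --- that among all lattice-homomorphism retractions $I(Z)\to Z$ (or $J\to Z$) there is a contractive, or even just bounded, one --- is explicitly ``granted'' rather than proved, and it is in fact false, so the proposed repair cannot be carried out. Take $X=L_2[0,1]$ and $Z=\mathbb{R}\,\chi_{[0,1]}$, a closed one-dimensional (hence Dedekind complete) sublattice; then $J=L_\infty[0,1]$, and $I(Z)=L_2[0,1]$ is WCG, so the hypotheses of the theorem hold (and its conclusion is trivial). Any lattice-homomorphism retraction $P_0\colon J\to Z$ has the form $P_0=f(\cdot)\,\chi_{[0,1]}$ with $f$ a Riesz homomorphism on $L_\infty[0,1]$ satisfying $f(\chi_{[0,1]})=1$. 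Such an $f$ is $\{0,1\}$-valued on characteristic functions, and $\mathcal{U}=\{A:f(\chi_A)=1\}$ is an ultrafilter on the measure algebra; halving sets shows $\mathcal{U}$ contains sets $A_n$ with $\lambda(A_n)\to 0$, and then $\|\lambda(A_n)^{-1/2}\chi_{A_n}\|_{2}=1$ while $f\bigl(\lambda(A_n)^{-1/2}\chi_{A_n}\bigr)=\lambda(A_n)^{-1/2}\to\infty$. So \emph{every} retraction is unbounded: there is nothing for your ``disjoint-system'' selection to select. Even worse, no lattice homomorphism $L_2[0,1]\to Z$ fixing $Z$ exists at all, bounded or not: choosing $A_n\in\mathcal{U}$ with $\lambda(A_n)\le 16^{-n}$, the element $x=\sum_n 2^n\chi_{A_n}$ lies in $L_2[0,1]$ and would satisfy $f(x)\ge 2^n$ for every $n$. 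Your diagnosis of where the difficulty sits is thus vindicated --- the same example shows that the paper's own ``by density'' step is not innocuous, since the homomorphism being extended need not be continuous --- but the norm-one-retraction lemma you propose as the ``real content'' is unavailable, and closing the gap would require an argument of a different nature, one that uses the LWCG hypothesis on $I(Z)$ itself rather than pure extension/retraction theory.
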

\begin{proof}
Note that
$$
	Y:=\left\{x\in X:\exists z\in Z \textrm{ with }|x|\leq z\right\}
$$
is the smallest (not necessarily closed) ideal of~$X$ containing~$Z$, so that $I(Z)=\overline{Y}$.
By the Lipecki-Luxemburg-Schep theorem (see e.g. \cite[Theorem 2.29]{ali-bur}), the identity on~$Z$ can be extended to a lattice homomorphism 
$T_0:Y\rightarrow Z$ (we use the Dedekind completeness of~$Z$ and the fact that $Z$ is a majorizing sublattice of~$Y$). By density,
$T_0$ admits a further extension to a lattice homomorphism $T:I(Z)\to Z$. Since $T$ is surjective
and $I(Z)$ is LWCG, Proposition \ref{pro:onto} ensures that $Z$ is LWCG.
\end{proof}

\begin{corollary}
Let $X$ be a Dedekind $\sigma$-complete Banach lattice. If every ideal of~$X$ is LWCG, then $X$ is WCG.
\end{corollary}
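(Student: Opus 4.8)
The plan is to reduce everything to the order continuous case via a dichotomy. First I would observe that $X$ itself, being a closed ideal of $X$, is LWCG by hypothesis; since $WCG\Rightarrow LWCG\Rightarrow IWCG\Rightarrow BWCG$, this makes $X$ in particular BWCG. Consequently, if I can show that $X$ is order continuous, then Theorem~\ref{t:order cont IWCG=WCG} immediately gives that $X$ is WCG, since for an order continuous Banach lattice BWCG coincides with WCG. So the whole problem is shifted to proving order continuity of $X$.

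The core of the argument is therefore to show that a Dedekind $\sigma$-complete Banach lattice in which every ideal is LWCG must be order continuous, and I would do this by contradiction. Suppose $X$ is not order continuous. Here I would invoke the classical structure theorem for Dedekind $\sigma$-complete Banach lattices: such a lattice is order continuous if and only if it contains no closed sublattice lattice isomorphic to $\ell_\infty$ (this is part of the Lindenstrauss--Tzafriri / Meyer-Nieberg theory, see e.g. \cite{lin-tza-2,mey2}). Failing order continuity, one extracts an order bounded normalized disjoint positive sequence $(x_n)$ and, using Dedekind $\sigma$-completeness to form the suprema $\bigvee_n a_n x_n$, produces a closed sublattice $Z\subset X$ together with a lattice isomorphism of $Z$ onto $\ell_\infty$. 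In particular $Z$ is Dedekind complete, being order isomorphic to $\ell_\infty$.

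Now I would apply the remaining ingredients already at hand. The ideal $I(Z)$ is a closed ideal of $X$, hence LWCG by hypothesis, so Theorem~\ref{p:ell_infty} (applied to the Dedekind complete sublattice $Z$) yields that $Z$ is LWCG. On the other hand, $Z$ is lattice isomorphic to $\ell_\infty=(\ell_1)^*$, a non-separable Banach lattice whose dual is $w^*$-separable; by Corollary~\ref{pro:separablepredual}(i) such a space is LWCG only if it is separable, so $\ell_\infty$ is not LWCG, and then neither is $Z$ (if $Z$ were LWCG, Proposition~\ref{pro:onto}(i) applied to the lattice isomorphism $Z\to\ell_\infty$ would force $\ell_\infty$ to be LWCG). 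This contradiction shows that $X$ is order continuous, and the first paragraph closes the proof. The assembling of Theorem~\ref{p:ell_infty}, Theorem~\ref{t:order cont IWCG=WCG} and Corollary~\ref{pro:separablepredual} is routine; the one genuinely external input, and the step I expect to require the most care, is the construction of the lattice copy of $\ell_\infty$ inside a non-order-continuous Dedekind $\sigma$-complete lattice, i.e. checking that $a\mapsto\bigvee_n a_n x_n$ is indeed a lattice isomorphism onto a closed sublattice with an equivalent norm.
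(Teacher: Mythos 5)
Your proposal is correct and follows essentially the same route as the paper: reduce to order continuity via Theorem~\ref{t:order cont IWCG=WCG}, then argue by contradiction that a non-order-continuous Dedekind $\sigma$-complete lattice contains a Dedekind complete sublattice $Z$ lattice isomorphic to $\ell_\infty$, which by Theorem~\ref{p:ell_infty} would force $Z$ (hence $\ell_\infty$) to be LWCG, contradicting Corollary~\ref{pro:separablepredual}. The only difference is that you spell out the non-LWCG-ness of $\ell_\infty$ and the $\ell_\infty$-embedding (which the paper simply cites, as \cite[Theorem~4.51]{ali-bur}), so no gap remains.
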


\begin{proof}
According to Theorem \ref{t:order cont IWCG=WCG}, it suffices to prove that $X$ is order continuous.
By contradiction, suppose $X$ is not order continuous. 
Since $X$ is Dedekind $\sigma$-complete, $X$ contains a sublattice $Z$ which is lattice isomorphic to~$\ell_\infty$
(see e.g. \cite[Theorem~4.51]{ali-bur}). In particular, $Z$ is Dedekind complete and non LWCG.
From Theorem~\ref{p:ell_infty} it follows that $I(Z)$ cannot be LWCG, a contradiction.
\end{proof}

These results motivate the question: {\em Can an LWCG Banach lattice contain a sublattice isomorphic to $\ell_\infty$?} 
If the answer were negative, then every Dedekind $\sigma$-complete 
LWCG Banach lattice would be WCG.

\section{Applications of the Davis-Figiel-Johnson-Pe\l czy\'{n}ski factorization}\label{section:DFJP}

The Davis-Figiel-Johnson-Pelcz\'{y}nski (DFJP)~\cite{dav-alt} factorization method is a keystone of Banach space theory.
Given an absolutely convex bounded subset $W$ of a Banach space~$X$, the DFJP interpolation Banach space
obtained from~$W$ is denoted by~$\Delta(W,X)$ (cf. \cite[Theorem~5.37]{ali-bur}). 
As a set, $\Delta(W,X)$ is a linear subspace of~$X$. The identity map $J:\Delta(W,X) \to X$ is an 
operator and $J(B_{\Delta(W,X)}) \supset W$. The space $\Delta(W,X)$ is reflexive
(resp. contains no isomorphic copy of~$\ell_1$) if and only if $W$ is weakly relatively
compact (resp. weakly precompact), see e.g. \cite[Theorem~5.37]{ali-bur} (resp. \cite[Theorem~5.3.6]{gon-abe}).

Bearing in mind that the absolutely convex hull of any weakly precompact set in a Banach space
is also weakly precompact (see e.g. \cite[p.~377]{ros-J-7}), it follows from the DFJP factorization method
that {\em a Banach space $X$ is WPG if and only if there exist a Banach space $Y$ not containing~$\ell_1$ 
and an operator $T:Y \to X$ with dense range}. As an application we get the following result
(cf. \cite[Corollary~2.3.1]{sch-PhD}).

\begin{proposition}\label{proposition:WPGNoBetaN}
If $X$ is a WPG Banach space, then $X$ contains no subspace isomorphic to~$\ell_\infty$.
\end{proposition}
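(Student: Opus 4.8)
The plan is to combine the operator form of WPG recorded just above with the special position of $\ell_\infty$ as an injective space, thereby reducing everything to the single fact that $\ell_\infty$ itself is not WPG. First I would use the characterization established above: $X$ is WPG if and only if there exist a Banach space $Y$ not containing~$\ell_1$ and an operator $T\colon Y\to X$ with dense range. Arguing by contradiction, suppose $X$ is WPG and contains a subspace $E$ isomorphic to $\ell_\infty$. Since $\ell_\infty$ is $1$-injective, an isomorphism $u\colon E\to\ell_\infty$ extends to an operator $\tilde u\colon X\to\ell_\infty$, and then $u^{-1}\tilde u\colon X\to E$ is a bounded projection onto~$E$. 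Hence $E\cong\ell_\infty$ is \emph{complemented} in~$X$.

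Next I would check that WPG passes to complemented subspaces, which reduces the problem to showing that $\ell_\infty$ is not WPG. Indeed, let $P\colon X\to E$ be a projection and take $T\colon Y\to X$ with dense range and $\ell_1\not\hookrightarrow Y$. Then $P\circ T\colon Y\to E$ still has dense range, because $P$ is a bounded surjection onto $E$ and the continuous image $P(T(Y))$ of the dense set $T(Y)$ is dense in $E=P(X)$; meanwhile $Y$ does not contain $\ell_1$. Thus $E$, and so $\ell_\infty$, would be WPG. It therefore suffices to prove that $\ell_\infty$ admits no dense-range operator from an $\ell_1$-free space.

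For this crux I would again argue by contradiction: suppose $T\colon Y\to\ell_\infty$ has dense range with $\ell_1\not\hookrightarrow Y$, so that $T(B_Y)$ is a weakly precompact fundamental set. The idea is to show that $T$ is \emph{weakly compact}; granting this, $\overline{T(B_Y)}^{\,w}$ is a weakly compact fundamental set, so $\ell_\infty$ would be WCG, contradicting the fact that $\ell_\infty=C(\beta\mathbb{N})$ is not WCG (as $\beta\mathbb{N}$ is not Eberlein, cf. Example~\ref{example:Eberlein}). To prove weak compactness I would exploit that, since $\ell_1\not\hookrightarrow Y$, by Rosenthal's theorem every sequence in $B_Y$ has a weakly Cauchy subsequence; by Eberlein--\v{S}mulian, $T$ is then weakly compact precisely when it maps weakly Cauchy sequences to weakly convergent ones. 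Given a weakly Cauchy $(y_n)$ with $w^*$-limit $y^{**}\in Y^{**}$, the images $Ty_n$ converge $w^*$ to $\Phi:=T^{**}y^{**}\in\ell_\infty^{**}$, and the task is to show $\Phi\in\ell_\infty$. Here the Grothendieck property of $\ell_\infty$ enters: every $w^*$-null sequence $(\mu_k)$ in $\ell_\infty^*$ is weakly null, whence $\Phi(\mu_k)\to 0$, so $\Phi$ annihilates all $w^*$-null sequences.

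The step I expect to be the main obstacle is exactly the last one: upgrading ``$\Phi$ annihilates $w^*$-null sequences'' to ``$\Phi\in\ell_\infty$'', that is, passing from $w^*$-\emph{sequential} to $w^*$-\emph{topological} continuity of $\Phi$ on the non-metrizable ball $(B_{\ell_\infty^*},w^*)$, where sequential and topological continuity genuinely diverge. The additional leverage I would try to use is that $\Phi$ is not an arbitrary functional but itself a $w^*$-sequential limit of the elements $Ty_n\in\ell_\infty$, together with the density of the range of~$T$; this is precisely the delicate point carried out in the cited argument \cite[Corollary~2.3.1]{sch-PhD}, and it is what makes the Grothendieck property of $\ell_\infty$ do the decisive work that separates WPG from the mere absence of an $\ell_1$ copy in the domain.
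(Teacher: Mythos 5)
Your reduction is fine and matches the paper's: since $\ell_\infty$ is injective, any isomorphic copy of it inside $X$ is complemented, WPG clearly passes to complemented subspaces, and so everything comes down to showing that $\ell_\infty$ itself is not WPG. From that point on, however, your argument has a genuine gap --- one you yourself flag --- and the specific tools you propose provably cannot close it. You want to show that a dense-range operator $T\colon Y\to\ell_\infty$ with $\ell_1\not\hookrightarrow Y$ is weakly compact, by taking a weakly Cauchy sequence $(y_n)$ in $Y$, forming $\Phi=T^{**}y^{**}$, the $w^*$-limit of $(Ty_n)$ in $\ell_\infty^{**}$, and using the Grothendieck property of $\ell_\infty$ to conclude that $\Phi$ annihilates $w^*$-null sequences in $\ell_\infty^*$. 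But that conclusion is vacuous: by the very same Grothendieck property, \emph{every} element of $\ell_\infty^{**}$ annihilates $w^*$-null sequences, because a $w^*$-null sequence $(\mu_k)$ in $\ell_\infty^*$ is weakly null, and weak nullity means precisely that $\Phi(\mu_k)\to 0$ for \emph{all} $\Phi$ in the bidual. So the property you extract cannot distinguish elements of $\ell_\infty$ from elements of $\ell_\infty^{**}\setminus\ell_\infty$, and there is nothing to ``upgrade.'' A concrete illustration that the method (not the statement) fails: for the inclusion $c_0\hookrightarrow\ell_\infty$, the weakly Cauchy sequence $y_n=\sum_{k\leq n}e_k$ has $w^*$-limit $\Phi\in\ell_\infty^{**}\setminus\ell_\infty$ (it equals $1$ on every coordinate functional but $0$ on every free ultrafilter functional $\delta_p$), yet $\Phi$ annihilates all $w^*$-null sequences. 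This also shows that dense range must enter the argument in an essentially non-sequential way; your final paragraph simply defers that --- the actual crux --- to the cited thesis \cite{sch-PhD}, which is an appeal to authority, not a proof.

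For comparison, the paper closes the crux with a short, purely topological argument that uses the dense range exactly where it must: dense range makes $T^*\colon\ell_\infty^*\to Y^*$ injective and $w^*$-$w^*$-continuous; restricting $T^*$ to the canonical copy of $\beta\N$ inside $(B_{\ell_\infty^*},w^*)$ (a continuous injection of a compact space into a Hausdorff space is a homeomorphism onto its image) plants, after scaling, a homeomorphic copy of $\beta\N$ inside $(B_{Y^*},w^*)$; then Talagrand's theorem \cite{tal4} produces a subspace of $Y$ isomorphic to $\ell_1(\mathfrak{c})$, contradicting $\ell_1\not\hookrightarrow Y$. If you want to salvage your own route, you need a tool of comparable strength at exactly the spot you left open; sequential arguments about $\ell_\infty^*$ cannot work, since $(B_{\ell_\infty^*},w^*)$ is as far from being sequentially determined as possible (for instance, $\beta\N$ contains no nontrivial convergent sequences).
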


\begin{proof} 
The property of being WPG is clearly inherited by {\em complemented} subspaces and, therefore, it suffices
to prove that $\ell_\infty$ is not WPG. By contradiction, suppose that $\ell_\infty$ is WPG.
Let $Y$ be a Banach space not containing~$\ell_1$ and $T:Y \to \ell_\infty$ an operator with dense range.
Then its adjoint $T^*: \ell_\infty^* \to Y^*$ is injective. In particular,
$(B_{Y^*},w^*)$ contains an homeomorphic copy of~$\beta\N$. Now, 
a result by Talagrand~\cite{tal4} ensures that $Y$ contains a subspace isomorphic to~$\ell_1(\mathfrak{c})$,
a contradiction.
\end{proof}

In order to apply Proposition~\ref{proposition:WPGNoBetaN} to Banach lattices, recall that
the following statements are equivalent for a Banach lattice~$X$ (see e.g. \cite[Theorem~4.69]{ali-bur}):
\begin{enumerate}
\item[(i)] $X^*$ is order continuous;
\item[(ii)] $X^*$ contains no subspace isomorphic to~$c_0$ (resp.~$\ell_\infty$);
\item[(iii)] $X$ contains no sublattice which is lattice isomorphic to~$\ell_1$. 
\end{enumerate}

\begin{corollary}\label{cor:dualWPG}
Let $X$ be a Banach lattice. Then $X^*$ is WCG if and only if it is WPG.
\end{corollary}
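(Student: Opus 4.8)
The plan is to prove only the nontrivial implication, namely that a WPG dual Banach lattice $X^*$ must be WCG; the reverse direction is immediate, since every weakly compact set is weakly precompact, so WCG always entails WPG.

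First I would observe that $X^*$ is itself a Banach lattice (the dual of a Banach lattice carries a natural lattice structure), so the order-continuous machinery of Section~\ref{section:OC} becomes available to it as soon as order continuity is established. The strategy is therefore to reduce the statement to Corollary~\ref{cor:ocWPG}, which asserts that WCG and WPG coincide for order continuous Banach lattices. Hence it suffices to show that if $X^*$ is WPG, then $X^*$ is order continuous.

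To obtain order continuity I would argue as follows. Assuming $X^*$ is WPG, Proposition~\ref{proposition:WPGNoBetaN} yields that $X^*$ contains no subspace isomorphic to~$\ell_\infty$. Now I invoke the list of equivalences recalled just before the statement: for the Banach lattice~$X$, the absence in~$X^*$ of a subspace isomorphic to~$\ell_\infty$ is equivalent to the order continuity of~$X^*$ (this is the implication (ii)$\Rightarrow$(i) there). This gives exactly that $X^*$ is order continuous. With order continuity in hand, Corollary~\ref{cor:ocWPG} applies to $X^*$: being WPG, it is therefore WCG, which completes the argument.

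The one point deserving care is bookkeeping about which object each tool is applied to---Proposition~\ref{proposition:WPGNoBetaN} is a purely Banach-space statement applied to~$X^*$, whereas the equivalence (ii)$\Rightarrow$(i) and Corollary~\ref{cor:ocWPG} are lattice statements about $X^*$ viewed as a Banach lattice---but no genuine obstacle arises, since these results were arranged precisely so as to chain together in this way. In effect there is no hard analytic step: the proof is a short composition of the DFJP-based nonembedding result with the order-continuity characterization.
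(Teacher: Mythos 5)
Your proof is correct and follows exactly the paper's own argument: apply Proposition~\ref{proposition:WPGNoBetaN} to $X^*$ to rule out copies of $\ell_\infty$, invoke the equivalence (ii)$\Rightarrow$(i) preceding the corollary to get order continuity of $X^*$, and conclude via Corollary~\ref{cor:ocWPG}. No differences worth noting.
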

\begin{proof} In view of the comments above and Proposition~\ref{proposition:WPGNoBetaN}, if $X^*$ is WPG, then $X^*$ is order continuous and 
so the result follows from Corollary~\ref{cor:ocWPG}.
\end{proof}

The question of whether LWCG = WCG for arbitrary Banach lattices can be reduced to 
Banach lattices with order continuous dual, thanks to the following result.

\begin{theorem}\label{l:DFJP}
Let $X$ be a Banach lattice. If $X$ is LWCG (resp. IWCG), then there exist 
an LWCG (resp. IWCG) Banach lattice $Y$ and a lattice homomorphism (resp. interval preserving lattice homomorphism) $J:Y \to X$ such that:
\begin{enumerate}
\item[(i)] $Y^*$ is order continuous;
\item[(ii)] $X=\overline{J(Y)}$.
\end{enumerate}
\end{theorem}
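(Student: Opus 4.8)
The plan is to feed a suitably chosen solid set into the Davis--Figiel--Johnson--Pe\l czy\'nski machine so that the resulting interpolation space is not merely a Banach space but a Banach lattice of the right type, and then to cut it down to the piece generated by the original weakly compact set. I will handle both cases in parallel. Assume first that $X$ is IWCG and fix a weakly compact $K\subset X$ with $X=I(K)$. Let $W$ be the smallest closed, convex, solid subset of~$X$ containing~$K$; concretely $W=\overline{\conv(\sol(K))}$, which is bounded, solid (the convex hull and the norm closure of a solid set are solid) and absolutely convex (a solid set is balanced), and which satisfies $W\supseteq\sol(K)$. The first key point, discussed below, is that $W$ is weakly precompact. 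Granting this, put $Y_0:=\Delta(W,X)$ with identity inclusion $J_0\colon Y_0\to X$, so that $J_0(B_{Y_0})\supseteq W$.

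Because $W$ is solid, the DFJP construction produces a Banach lattice rather than just a Banach space. Indeed the sets $2^nW+2^{-n}B_X$ are solid (the sum of two solid sets is solid, by the Riesz decomposition property), so their Minkowski functionals are monotone seminorms and the resulting interpolation norm is a lattice norm. Hence $Y_0$ is an ideal of~$X$; in particular it is a Banach lattice and $J_0$ is an interval preserving lattice homomorphism. Since $W$ is weakly precompact, $Y_0$ contains no isomorphic copy of~$\ell_1$, hence no sublattice lattice isomorphic to~$\ell_1$, and by the equivalence recalled before Corollary~\ref{cor:dualWPG} this gives that $Y_0^*$ is order continuous.

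Next I must recover the weakly compact set inside the interpolation space. Here I use the standard bidual description of the factorization: $J_0^{**}\colon Y_0^{**}\to X^{**}$ is injective, and an element $y^{**}\in Y_0^{**}$ lies in $Y_0$ precisely when $J_0^{**}(y^{**})\in X$ (see e.g. \cite[Theorem~5.37]{ali-bur}). Given any point $y^{**}$ in the weak$^*$-closure of~$K$ in $Y_0^{**}$, it is a weak$^*$-limit of a net from~$K$; the image of that net under $J_0^{**}$ is the weak limit of the same net in~$X$, which lies in $K\subset X$ because $K$ is weakly compact in~$X$. Thus $J_0^{**}(y^{**})\in X$, so $y^{**}\in Y_0$, and therefore $K$ is (relatively) weakly compact in~$Y_0$; being weakly closed in $Y_0$ as well, it is weakly compact there. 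Now set $Y:=I_{Y_0}(K)$, the ideal generated by~$K$ inside~$Y_0$. As an ideal of an ideal it is an ideal of~$X$, hence a Banach lattice, and $J:=J_0|_Y$ is again an interval preserving lattice homomorphism. By construction $K$ generates~$Y$ as an ideal and $K$ is weakly compact in the closed subspace~$Y$, so $Y$ is IWCG; moreover $Y\subseteq Y_0$ gives that $Y^*$ is order continuous. Finally, the identity $\overline{J(I_{Y_0}(K))}=I_X(J(K))$ from the proof of Proposition~\ref{pro:onto} yields $\overline{J(Y)}=I_X(K)=X$. The LWCG case is identical after replacing ideal, interval preserving homomorphism and $I(\cdot)$ by sublattice, lattice homomorphism and $L(\cdot)$: one uses the same $Y_0$, puts $Y:=L_{Y_0}(K)$, and invokes $L(K)=X$ together with $\overline{J(L_{Y_0}(K))}=L_X(J(K))$ for the density.

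The main obstacle is precisely the first claim: that the closed convex solid hull of a weakly compact set is weakly precompact. By the stability of weak precompactness under absolutely convex hulls and norm closure (recalled before Proposition~\ref{proposition:WPGNoBetaN}), this reduces to showing that $\sol(K)$ is weakly precompact whenever $K$ is. This is where the real work lies, and I would prove it by contradiction using Rosenthal's $\ell_1$-theorem together with a disjointification argument: an $\ell_1$-sequence $(y_n)\subset\sol(K)$ with $|y_n|\le|x_n|$, $x_n\in K$, could be replaced, after passing to an almost disjoint subsequence, by an equivalent disjoint sequence in $\sol(K)$; but disjoint sequences in the solid hull of a weakly precompact set are weakly null, which contradicts the lower $\ell_1$-estimate. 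Everything else in the argument is formal DFJP input or the lattice bookkeeping carried out above.
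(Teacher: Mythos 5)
Your construction follows the paper's proof almost step for step---DFJP applied to the convex solid hull $W$ of $K$, yielding a Banach lattice $Y_0=\Delta(W,X)$ with an interval preserving lattice homomorphism $J_0$, pulling $K$ back into $Y_0$, and taking $Y=L_{Y_0}(K)$ (resp.\ $I_{Y_0}(K)$)---but the step you yourself call ``the main obstacle'' is a genuine gap, and it cannot be repaired. The claim that the closed convex solid hull of a weakly compact set is weakly precompact is false in a general Banach lattice. Concrete counterexample: in $X=C[0,1]$ take $K=\{1_{[0,1]}\}$, a weakly compact set with $I(K)=X$, so the theorem does apply to this $K$; then $\sol(K)=[-1_{[0,1]},1_{[0,1]}]=B_{C[0,1]}$, which contains sequences equivalent to the $\ell_1$-basis (since $C[0,1]$ contains $\ell_1$ isomorphically) and hence is not weakly precompact. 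Your proposed disjointification proof breaks down exactly here: an $\ell_1$-sequence in a solid hull need not admit any almost disjoint subsequence---in $C(K)$-spaces normalized disjoint sequences are equivalent to the $c_0$-basis, so $\ell_1$-sequences are necessarily far from disjoint and no passing to subsequences helps. The paper itself flags this phenomenon: weak precompactness of the convex solid hull of a weakly precompact set is a Ghoussoub--Johnson theorem (\cite[Corollary~II.4]{gho-joh}) valid precisely under the extra hypothesis that $X$ contains no copy of $C[0,1]$, which is why that hypothesis appears in Theorem~\ref{theorem:AsplundGenerated} and not in Theorem~\ref{l:DFJP}.

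What the paper does instead at this point is quote \cite[Theorem~5.43]{ali-bur}, which asserts directly that when $W$ is the convex solid hull of a weakly compact set, the dual of $\Delta(W,X)$ is order continuous. The reason this works where your route fails is that order continuity of $\Delta(W,X)^*$ does not require $\Delta(W,X)$ to contain no isomorphic copy of $\ell_1$; by the equivalence recalled before Corollary~\ref{cor:dualWPG} it only requires the absence of \emph{lattice} copies of $\ell_1$, i.e.\ of disjoint $\ell_1$-sequences, and disjoint sequences are exactly what one can control: disjoint sequences in the solid hull of a weakly precompact set are weakly null (\cite[Proposition~2.5.12(iii)]{mey2}, the same fact used in Lemma~\ref{lem:ocW}). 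In the counterexample above the needed conclusion is still true---$\Delta(B_{C[0,1]},C[0,1])$ is just $C[0,1]$ renormed, and $C[0,1]^*=M[0,1]$ is order continuous---even though $C[0,1]$ contains $\ell_1$; this shows your intermediate claim is strictly stronger than what is true. The rest of your argument (the lattice structure of $\Delta(W,X)$, the pullback of $K$ via the bidual characterization, heredity of order continuous dual by sublattices, and the density of $J(Y)$) agrees with the paper and is fine once the order continuity of $Y_0^*$ is obtained by the correct citation.
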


\begin{proof}
Let $K \subset X$ be a weakly compact set such that $X=L(K)$ (resp. $X=I(K)$) and let $W:={\rm co}({\rm sol}(K))$ be its convex solid hull
(which is absolutely convex and bounded). Then $\Psi:=\Delta(W,X)$ is a Banach lattice, the identity operator $J: \Psi \to X$
is an interval preserving lattice homomorphism and $J(\Psi)$ is a
(not necessarily closed) ideal of~$X$, see e.g. \cite[Theorem~5.41]{ali-bur}. 

Moreover, from the weak compactness of~$K$ it follows that $\Psi^*$ is order continuous (see e.g. \cite[Theorem~5.43]{ali-bur}).
Since $J$ is a weak-weak homeomorphism when restricted to~$B_{\Psi}$ (see e.g. \cite[p.~313, Exercise~11]{ali-bur}),
the set $K_0=:J^{-1}(K)$ is weakly compact in~$\Psi$.
Then $Y:=L(K_0)$ (resp.~$Y:=I(K_0)$) is an LWCG sublattice (resp. IWCG ideal) of~$\Psi$. 
Since the property of having order continuous dual in inherited by sublattices
(see the comments preceding Corollary~\ref{cor:dualWPG}), $Y^*$ is order continuous. 
Finally, from the fact that $J$ is an interval preserving lattice homomorphism it follows that $X=\overline{J(Y)}$
(see the proof of Proposition~\ref{pro:onto}).
\end{proof}

\begin{remark}\label{remark:anotherproof}
The DFJP factorization and the result from~\cite{buk-alt} isolated in Corollary~\ref{cor:rusos}
provide an alternative proof of Theorem~\ref{t:order cont IWCG=WCG}. Indeed,
let $\Psi$ and $J$ be as in the proof of Theorem~\ref{l:DFJP}.
If we assume further that $X$ is order continuous, then so is $\Psi$ (see e.g. \cite[Theorem~5.41]{ali-bur}).
From the order continuity of $\Psi$ and $\Psi^*$ we infer that $\Psi$ is WCG, \cite[p.~194]{buk-alt}.
Finally, the equality $X=\overline{J(\Psi)}$ ensures that $X$ is WCG. 
\end{remark}

Order continuous Banach lattices cannot contain isomorphic copies of~$C[0,1]$ (see e.g. \cite[Corollary~5.1.12]{mey2}).
In Theorem~\ref{theorem:AsplundGenerated} below we give an improvement of Theorem~\ref{l:DFJP}
within the class of Banach lattices not containing~$C[0,1]$. 

Recall first that a Banach
space~$X$ is said to be {\em Asplund} if every separable subspace of~$X$ has separable dual or,
equivalently, $X^*$ has the Radon-Nikod\'{y}m property, \cite[p.~198]{die-uhl-J}. A Banach space~$X$
is said to be {\em Asplund generated} if there exist an Asplund Banach space~$Y$ and an operator $T:Y \to X$
with dense range. By the DFJP factorization, every WCG Banach space is Asplund generated.

\begin{theorem}\label{theorem:AsplundGenerated}
Let $X$ be a Banach lattice not containing subspaces isomorphic to~$C[0,1]$. If $X$ is LWCG (resp. IWCG), then there exist 
an LWCG (resp. IWCG) Banach lattice $Y$ and a lattice homomorphism (resp. interval preserving lattice homomorphism) $J:Y \to X$ such that:
\begin{enumerate}
\item[(i)] $Y$ is Asplund;
\item[(ii)] $X=\overline{J(Y)}$.
\end{enumerate}
In particular, $X$ is Asplund generated.
\end{theorem}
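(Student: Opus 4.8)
The plan is to re-run the Davis--Figiel--Johnson--Pe\l czy\'{n}ski construction exactly as in the proof of Theorem~\ref{l:DFJP}, and then to upgrade its conclusion ``$Y^\ast$ order continuous'' to the stronger ``$Y$ Asplund'' by exploiting the hypothesis that $X$ contains no copy of~$C[0,1]$. Concretely, I would start from a weakly compact set $K\subset X$ with $X=L(K)$ (resp.\ $X=I(K)$), set $W:=\conv(\sol(K))$, and form the interpolation lattice $\Psi:=\Delta(W,X)$ together with the injective interval preserving lattice homomorphism $J:\Psi\to X$, whose image is an ideal of~$X$ and which satisfies $J(B_\Psi)\supset W$; the closed sublattice (resp.\ closed ideal) $Y:=L(J^{-1}(K))$ (resp.\ $Y:=I(J^{-1}(K))$) of~$\Psi$ is then LWCG (resp.\ IWCG), the restriction of~$J$ is the required (interval preserving) lattice homomorphism, and $X=\overline{J(Y)}$, exactly as before. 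Thus conclusion~(ii) and the homomorphism statement are already in hand, and only~(i) is new.

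For~(i) the key fact is that $\Delta(W,X)$ contains no copy of~$\ell_1$ precisely when $W$ is weakly precompact. I would therefore reduce the theorem to showing that $W=\conv(\sol(K))$ is weakly precompact. Since $K$ is weakly compact it is weakly precompact, and since the absolutely convex hull of a weakly precompact set is again weakly precompact (see e.g.\ \cite[p.~377]{ros-J-7}), it suffices to prove that the solid hull $\sol(K)$ of a weakly precompact set is weakly precompact. This is exactly where the absence of~$C[0,1]$ enters, through the lemma that \emph{in a Banach lattice containing no subspace isomorphic to~$C[0,1]$, the solid hull of any weakly precompact set is weakly precompact}. Granting this, $W$ is weakly precompact, so $\Psi$ contains no copy of~$\ell_1$; since a Banach lattice is Asplund if and only if it contains no subspace isomorphic to~$\ell_1$ (and note in addition that $\Psi^\ast$ is order continuous, as in Theorem~\ref{l:DFJP}), the lattice $\Psi$ is Asplund. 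As $Y$ is a closed subspace of~$\Psi$ and the Asplund property passes to subspaces, $Y$ is Asplund, which gives~(i). Finally, because $Y$ is Asplund and $J:Y\to X$ has dense range, $X$ is Asplund generated by definition.

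The main obstacle is the solid-hull lemma, i.e.\ that $\sol(K)$ is weakly precompact whenever $X$ contains no copy of~$C[0,1]$; everything else is a routine assembly of the DFJP facts already quoted in this section. I would attack it by contradiction via Rosenthal's $\ell_1$-theorem: a failure yields a sequence $(y_n)\subset\sol(K)$ equivalent to the unit vector basis of~$\ell_1$, with $|y_n|\le|x_n|$ for suitable $x_n\in K$; using the weak precompactness of~$K$ to pass to a weakly Cauchy subsequence of~$(x_n)$ and then a disjointification/gliding-hump argument controlled by the lattice structure, one would manufacture inside~$X$ an isomorphic copy of~$C[0,1]$, contradicting the hypothesis. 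This is the weakly precompact analogue of the solid-hull stability of weak compactness recorded before Theorem~\ref{pro:solidhull}, and I expect it to follow the circle of ideas in~\cite{che-wic}.
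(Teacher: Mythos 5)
Your proposal is correct and follows essentially the same route as the paper's proof: the same DFJP lattice $\Psi=\Delta(W,X)$ built from $W=\conv(\sol(K))$, the same $Y$ and $J$ as in Theorem~\ref{l:DFJP}, the same reduction of (i) to the weak precompactness of~$W$, and the same appeal to the fact that a Banach lattice containing no copy of~$\ell_1$ is Asplund. The solid-hull lemma you single out as the main obstacle is exactly the result the paper quotes from \cite[Corollary~II.4]{gho-joh} (in a Banach lattice with no subspace isomorphic to~$C[0,1]$, the convex solid hull of any weakly precompact set is weakly precompact), so it can simply be cited rather than re-proved by a gliding-hump construction.
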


\begin{proof}
Fix a weakly compact set $K\subset X$ such that $X=L(K)$ (resp. $X=I(K)$) and consider the set $W:={\rm co}({\rm sol}(K))$. 
Since $X$ contains no isomorphic copy of~$C[0,1]$, the convex solid hull
of any weakly precompact subset of~$X$ is weakly precompact (see \cite[Corollary~II.4]{gho-joh}), and so
is~$W$. Let $\Psi$, $J$ and~$Y$ be as in the proof of Theorem~\ref{l:DFJP}. Since $W$ is weakly precompact, 
$\Psi$ contains no isomorphic copy of~$\ell_1$. Hence the Banach lattice $\Psi$ is Asplund (see \cite[p.~95]{die-uhl-J} and \cite[Theorem~7]{gho-saa-J})
and the same holds for its subspace~$Y$.
\end{proof}

In view of the previous theorem, if the equality LWCG = WCG were true for Asplund Banach lattices, then
it would also be true for all Banach lattices not containing isomorphic copies of~$C[0,1]$.

\section{Miscellaneous properties}\label{section:Miscellaneous}

Rosenthal~\cite{ros-J-8} gave the first instance of a WCG Banach space with a non WCG subspace. Likewise, LWCG/IWCG/BWCG are not hereditary properties: 

\begin{example}\label{example:AM}
Let $X$ be the Banach space constructed in \cite[Section 2]{arg-mer2}, which is WCG and has 
an uncountable unconditional basis $\mathcal{E}=\{e_{(\sigma,m)}:\sigma\in\N^\N,m\in\mathbb N\}$. 
In particular, $X$ is an LWCG Banach lattice. Define $x_\sigma:=\sum_{m\in \N}\frac{1}{2^{m/2}}\cdot e_{(\sigma,m)}$
for any $\sigma\in \N^\N$. In \cite[Theorem~2.6]{arg-mer2} it was proved that 
$\mathcal{B}=\{x_\sigma:\sigma\in\N^\N\}$ is a block basis of~$\mathcal{E}$ such that 
$Y:=\overline{\spn}(\mathcal{B})$ is not WCG. Note that $Y$ is a sublattice of~$X$ (because the coordinates of the $x_\sigma$'s 
with respect to~$\mathcal{E}$ are positive) which 
is not LWCG (by Theorem~\ref{theorem:wsc-operations}). In fact, $Y$ cannot be BWCG (by Theorem~\ref{t:order cont IWCG=WCG}).
\end{example}

It is well known that being WCG is not a three space property, that is, there exist non WCG Banach spaces~$X$
having a WCG subspace~$Y \subset X$ such that $X/Y$ is WCG.  For complete information on the three space
problem for WCG Banach spaces, see \cite[Section~4.10]{cas-gon} and the references therein. 
If $X$ is a Banach lattice and $Y \subset X$ is an {\em ideal}, then $X/Y$ is a Banach lattice
and the quotient operator $X \to X/Y$ is a lattice homomorphism (see e.g. \cite[p.~3]{lin-tza-2}). 
Some counterexamples to the three space problem for WCG spaces fit into the Banach lattice setting, like the following
construction which goes back to \cite{joh-lin} (cf. \cite[Section~4.10]{cas-gon}).

\begin{example}\label{example:3SP}
Let $2^{<\omega}$ be the dyadic tree (finite sequences of $0$s and $1$s), $2^\omega$ the set of its branches 
(countable infinite sequences of $0$s and $1$s) and $K$ the one-point compactification of
$2^{<\omega} \cup 2^\omega$ equipped with the topology defined by: (i)~all points from $2^{<\omega}$ are isolated;
(ii)~any $x=(x_k)_{k<\omega}\in 2^{\omega}$ has a neighborhood basis 
made of the sets $\{x\} \cup \{(x_k)_{k<m} : m>n\}$ for $n<\omega$. 
Then $L:=2^\omega \cup \{\infty\}$ is a closed subset of~$K$ and so $Y:=\{f\in C(K): f|_L\equiv 0\}$ is
an ideal of~$C(K)$. It is not difficult to check that $Y$ is isomorphic to $c_0$ and that 
the quotient space $C(K)/Y$ is isomorphic to $C(L)$ which, in turn, is isomorphic to~$c_0(\mathfrak{c})$.
Hence $Y$ and $C(K)/Y$ are WCG. On the other hand, $C(K)$ is not WCG, because it is not separable and 
every weakly compact subset of~$C(K)$ is separable (since $K$ is separable). By the same reason,
$C(K)$ is not LWCG (cf. Corollary~\ref{cor:CK}).
\end{example}

However, a Banach space $X$ is WCG if there is a {\em reflexive} subspace $Y \subset X$ such that $X/Y$ is WCG,
see \cite{joh-lin} (cf. \cite[Proposition 4.10.d]{cas-gon}). Theorem~\ref{theorem:3space} below
collects some positive results on the three space problem for WCG and LWCG Banach lattices. We first need a result
on WPG Banach spaces which might be of independent interest.

\begin{proposition}\label{proposition:3spaceWPG}
Let $X$ be a Banach space and $Y \subset X$ a subspace containing no isomorphic copy of~$\ell_1$. 
If $X/Y$ is WPG, then $X$ is WPG.
\end{proposition}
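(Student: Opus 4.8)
The plan is to rely on the operator characterization of WPG recalled just before Proposition~\ref{proposition:WPGNoBetaN}: a Banach space is WPG if and only if it is the dense-range image of an operator defined on a Banach space containing no copy of~$\ell_1$. Write $q\colon X\to X/Y$ for the quotient map. Since $X/Y$ is WPG, I may fix a Banach space $Z$ containing no copy of~$\ell_1$ together with an operator $S\colon Z\to X/Y$ with dense range. The idea is to pull the extension $0\to Y\to X\to X/Y\to 0$ back along~$S$: set
\[
	W:=\{(z,x)\in Z\oplus_\infty X:\ Sz=qx\},
\]
a closed subspace of $Z\oplus_\infty X$, and let $T\colon W\to X$ be the second-coordinate projection $T(z,x)=x$. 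By the characterization above, it suffices to prove that $T$ has dense range and that $W$ contains no copy of~$\ell_1$.

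The density of the range is the easy part. The range of~$T$ is exactly $q^{-1}(S(Z))$; since $q$ is an open surjection and $S(Z)$ is dense in $X/Y$, the preimage $q^{-1}(S(Z))$ is dense in~$X$ (given $x\in X$, I correct it by an element of small norm mapping onto a point of $S(Z)$ near~$qx$). Hence $T$ has dense range.

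For the non-containment of~$\ell_1$, I would read off the short exact sequence carried by the pullback. The first-coordinate projection $(z,x)\mapsto z$ sends $W$ onto~$Z$ (using the surjectivity of~$q$) and has kernel $\{0\}\oplus Y$, so there is a short exact sequence $0\to Y\to W\to Z\to 0$. Here $Y$ contains no copy of~$\ell_1$ by hypothesis and $Z$ contains none by construction. Since the class of Banach spaces containing no copy of~$\ell_1$ is stable under extensions, i.e.\ it is a three-space property (see~\cite{cas-gon}), it follows that $W$ contains no copy of~$\ell_1$, and the proof is complete.

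The main obstacle is precisely this three-space property, which I would either cite or establish directly as follows. Suppose $(w_n)$ were an $\ell_1$-basis in~$W$. Its image under the projection onto~$Z$ is bounded, hence by Rosenthal's $\ell_1$-theorem it has a weakly Cauchy subsequence; passing to consecutive differences I may assume this image is weakly null in~$Z$, while $(w_n)$ is still equivalent to the $\ell_1$-basis. By Mazur's theorem applied to a weakly null sequence, there are successive (finite, disjointly supported) convex combinations $v_k$ of the $w_n$ whose projections to~$Z$ tend to~$0$ in norm. Such convex block combinations of an $\ell_1$-basis remain equivalent to the $\ell_1$-basis, whereas the norm convergence of the projections gives $\mathrm{dist}(v_k,\{0\}\oplus Y)\to 0$; choosing $y_k\in Y$ with $\|v_k-(0,y_k)\|\to 0$ and invoking the perturbation principle for basic sequences, a subsequence of $((0,y_k))$ is equivalent to the $\ell_1$-basis. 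This embeds $\ell_1$ into~$Y$, contradicting the hypothesis. I expect this last perturbation argument to be the only genuinely delicate point, all the remaining steps being formal.
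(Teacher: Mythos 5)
Your proof is correct, but it follows a genuinely different route from the paper's. The paper works directly with sets: it takes a weakly precompact set $K$ generating $X/Y$, lifts it through the open quotient map $q$ to a bounded set $G\subset X$ with $q(G)=K$, invokes the lifting result \cite[2.4.a]{cas-gon} (bounded preimages of weakly precompact sets under quotients with $\ell_1$-free kernel are weakly precompact) to see that $G$ is weakly precompact, and then shows by a Hahn--Banach separation argument that $G\cup B_Y$ spans a dense subspace of $X$. You instead work at the level of operators: you use the DFJP characterization of WPG recalled in the paper (dense-range image of a space without $\ell_1$), pull the exact sequence $0\to Y\to X\to X/Y\to 0$ back along an operator $S\colon Z\to X/Y$, and reduce everything to the fact that ``not containing $\ell_1$'' is a three-space property. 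Both arguments channel the hypothesis on $Y$ through essentially the same deep ingredient from \cite{cas-gon} --- indeed the lifting result 2.4.a used in the paper is the engine behind the three-space theorem you cite --- but your packaging is more structural: it isolates the general principle that an ``operator-generated'' property whose defining class of domains is stable under extensions passes from $X/Y$ to $X$, whereas the paper's argument is more elementary and self-contained, needing no pullback construction and no appeal to the three-space theorem as a black box.

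One caveat on your backup argument for the three-space property: ``passing to consecutive differences'' must mean differences of \emph{disjoint} consecutive pairs, say $d_j=w_{n_{2j+1}}-w_{n_{2j}}$. Overlapping differences $w_{n_{j+1}}-w_{n_j}$ of an $\ell_1$-sequence are \emph{not} in general equivalent to the $\ell_1$-basis (telescoping kills the lower estimate: $\bigl\|\sum_{j\le N}(w_{n_{j+1}}-w_{n_j})\bigr\|=\|w_{n_{N+1}}-w_{n_1}\|$ stays bounded). With paired differences the coefficients land on distinct basis vectors and the lower $\ell_1$-estimate survives, after which your Mazur-plus-perturbation argument goes through; alternatively, the citation of \cite{cas-gon} for the three-space property already suffices, so this does not affect the validity of your proof.
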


\begin{proof}
Let $q:X \to X/Y$ be the quotient operator and $K \subset X/Y$ a weakly precompact set such that $X/Y=\overline{{\rm span}}(K)$. 
Since $q$ is open and $K$ is bounded, there is a bounded set $G \subset X$ such that $q(G)=K$.
Since $Y$ contains no subspace isomorphic to~$\ell_1$ and $K$ is weakly precompact, $G$ is weakly precompact as well
(see e.g. \cite[2.4.a]{cas-gon}). Then $G_1:=G \cup B_Y \subset X$ is weakly precompact. 
We claim that $Z:=\overline{{\rm span}}(G_1)$ equals~$X$. By contradiction, suppose that $X\neq Z$.
By the Hahn-Banach separation theorem, there is $x^*\in X^* \setminus \{0\}$ such that $x^*(x)=0$ for all $x\in Z$.
In particular, $x^*$ vanishes on~$Y$ and so it factorizes as $x^*=\phi \circ q$ for some $\phi \in (X/Y)^*$. 
Note that $\phi$ vanishes on~$q(Z)$. But $X/Y=\overline{q(Z)}$ (because $q(Z)$ is a linear subspace
of~$X/Y$ containing $q(G)=K$), 
hence $\phi=0$ and so $x^*=0$, a contradiction. This shows that $X=Z$, as claimed. 
Therefore $X$ is WPG. 
\end{proof}

\begin{theorem}\label{theorem:3space}
Let $X$ be a Banach lattice and $Y \subset X$ an ideal.
\begin{enumerate}
\item[(i)] If $X$ is LWCG, then $X/Y$ is LWCG.
\item[(ii)] If $Y$ is reflexive and $X/Y$ is LWCG, then $X$ is LWCG.
\item[(iii)] If $X$ is order continuous, $Y$ contains no isomorphic copy of~$\ell_1$ and $X/Y$ is WCG, then $X$ is WCG.
\end{enumerate}
\end{theorem}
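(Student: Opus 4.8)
The plan is to obtain parts (i) and (iii) immediately from results already in hand, and to concentrate the real effort on part (ii). For (i): since $Y$ is an ideal, the quotient operator $q\colon X\to X/Y$ is a surjective lattice homomorphism (as recalled just before Example~\ref{example:3SP}), so it has dense range and Proposition~\ref{pro:onto}(i) applies verbatim to give that $X/Y$ is LWCG. For (iii): a WCG space is trivially WPG, so $X/Y$ is WPG; since $Y$ contains no isomorphic copy of~$\ell_1$, Proposition~\ref{proposition:3spaceWPG} yields that $X$ is WPG; and because $X$ is order continuous, Corollary~\ref{cor:ocWPG} upgrades this to WCG.

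The substance is (ii). I would first isolate a lifting lemma: if $Y\subset X$ is reflexive, $q\colon X\to X/Y$ is the quotient map and $K\subset X/Y$ is weakly compact, then for $M>0$ large enough the set
\[
S:=q^{-1}(K)\cap MB_X
\]
is weakly compact and satisfies $q(S)=K$. The surjectivity $q(S)=K$ is routine from the openness of~$q$ (take $M$ exceeding $\sup_{k\in K}\|k\|$ times a constant from the open mapping theorem). The set $S$ is bounded and, being the intersection of the weakly closed set $q^{-1}(K)$ with the weakly closed ball $MB_X$, it is weakly closed; so everything reduces to relative weak compactness of~$S$.

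To prove relative weak compactness I would pass to the bidual and show $\overline{S}^{\,w^\ast}\subset X$ inside $X^{\ast\ast}$. The adjoint $q^{\ast\ast}\colon X^{\ast\ast}\to (X/Y)^{\ast\ast}$ is $w^\ast$-$w^\ast$ continuous, with kernel $Y^{\perp\perp}$, which is the canonical image of $Y^{\ast\ast}$; reflexivity of~$Y$ makes this just $Y\subset X$. Given $\xi\in\overline{S}^{\,w^\ast}$, its image $q^{\ast\ast}(\xi)$ lies in $\overline{q(S)}^{\,w^\ast}\subset\overline{K}^{\,w^\ast}=K\subset X/Y$, since $K$ is weakly compact (hence $w^\ast$-closed in the bidual). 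Lifting $q^{\ast\ast}(\xi)$ to some $x_0\in X$, we get $\xi-x_0\in\ker q^{\ast\ast}=Y\subset X$, whence $\xi\in X$. Thus $\overline{S}^{\,w^\ast}\subset X$, which together with weak closedness shows $S$ is weakly compact. I expect this bidual computation to be the crux of the theorem.

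Finally I would assemble the generating set. Take a weakly compact $K\subset X/Y$ with $L(K)=X/Y$, form $S$ as above, and set $G:=S\cup B_Y$; this is weakly compact as a finite union of weakly compact sets ($B_Y$ because $Y$ is reflexive). Since $q$ is a lattice homomorphism, $\overline{q(L(G))}$ is a closed sublattice of $X/Y$ containing $q(G)\supset q(S)=K$, hence it contains $L(K)=X/Y$, so $q(L(G))$ is dense. Also $Y=\overline{\spn}(B_Y)\subset L(G)$. A closed subspace of $X$ that contains $Y$ and has dense image under $q$ must equal $X$ (approximate any $x\in X$ by elements of $L(G)$ modulo $Y$, then correct the error using elements of $Y\subset L(G)$). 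Therefore $L(G)=X$ with $G$ weakly compact, i.e.\ $X$ is LWCG.
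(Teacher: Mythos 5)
Your proposal is correct and follows essentially the same route as the paper: parts (i) and (iii) are dispatched exactly as in the paper via Proposition~\ref{pro:onto}, Proposition~\ref{proposition:3spaceWPG} and Corollary~\ref{cor:ocWPG}, and for (ii) both arguments lift $K$ to a weakly compact subset of~$X$, adjoin~$B_Y$, and show the sublattice generated is all of~$X$ because its image under~$q$ is a dense sublattice of~$X/Y$ and it contains~$Y$. The only divergence is that the paper simply cites \cite[2.4.b]{cas-gon} for the weak compactness of the lifted set, whereas you prove that fact from scratch with a (correct) bidual argument using $\ker q^{\ast\ast}=Y^{\perp\perp}\cong Y^{\ast\ast}=Y$, and you replace the paper's Hahn--Banach separation step by a direct approximate-and-correct argument; both substitutions are sound.
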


\begin{proof} (i) This follows at once from Proposition~\ref{pro:onto} because the quotient operator
$q:X \to X/Y$ is a surjective lattice homomorphism.

(ii) Let $K \subset X/Y$ be a weakly compact set such that $X/Y=L(K)$. 
Bearing in mind that $q$ is open and that $K$ is bounded and weakly closed, 
we can find a bounded and weakly closed set $K_0 \subset X$ such that $q(K_0)=K$.
Since $Y$ is reflexive and $K$ is weakly compact, $K_0$ is weakly compact as well
(see e.g. \cite[2.4.b]{cas-gon}). Then the set $K_1:=K_0 \cup B_Y \subset X$ is weakly compact. We claim that $X=L(K_1)$. Indeed, define 
$Z:=L(K_1)$. Since $q$ is a lattice homomorphism and $Z$ is a sublattice, $q(Z)$ is a (not necessarily closed)
sublattice of $X/Y$. Bearing in mind that $q(Z) \supset q(K_0) = K$, we conclude that $q(Z)$ is dense in~$X/Y$. 
As in the proof of Proposition~\ref{proposition:3spaceWPG}, it follows that $X=Z=L(K_1)$ and so $X$ is LWCG.

(iii) This follows from~Corollary~\ref{cor:ocWPG} and Proposition~\ref{proposition:3spaceWPG}. 
\end{proof}

In connection with part~(iii) of the previous theorem, note that
if $X$ is an order continuous Banach lattice and $Y \subset X$ is an ideal, then
the quotient space $X/Y$ is order continuous (see e.g. \cite[p.~205, Exercise~13]{ali-bur}).

A Banach space~$X$ is said to be {\em weakly Lindel\"{o}f determined} (WLD)
if $(B_{X^*},w^*)$ is a Corson compact, i.e. it is homeomorphic to a set~$S \subset [-1,1]^\Gamma$,
for some non-empty set~$\Gamma$, in such a way that $\{\gamma\in \Gamma:s(\gamma)\neq 0\}$ is countable for every $s\in S$.
Every WCG space is WLD, but the converse does not hold in general. For a complete account
on this class of Banach spaces, we refer the reader to~\cite{fab-J,fab-ultimo,fab-alt-JJ}.

\begin{theorem}\label{theorem:WLD}
Let $X$ be a Banach lattice such that the order intervals of~$X$ and~$X^*$ are separable and $w^*$-separable, respectively.
If there is a WLD subspace $Y \subset X$ such that $X=I(Y)$, then $X$ is WLD. 
\end{theorem}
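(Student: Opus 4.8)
The plan is to produce a bounded, linearly dense family in $X$ whose evaluation against every functional of $X^*$ has countable support. By the standard description of WLD spaces, any such family $\{x_i\}_{i\in I}\subset B_X$ makes the map $x^*\mapsto(x^*(x_i))_{i\in I}$ an injective $w^*$-to-pointwise continuous map, hence a homeomorphism of $(B_{X^*},w^*)$ onto a compact subset of the $\Sigma$-product of $[-1,1]^I$; this exhibits $(B_{X^*},w^*)$ as Corson, i.e. shows $X$ is WLD. So the whole proof reduces to building the right family.

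First I would transfer the generating family from $Y$ to $X$. Since $Y$ is WLD, fix $\{y_\gamma:\gamma\in\Gamma\}\subset B_Y$ with $\overline{\spn}\{y_\gamma\}=Y$ and such that $\{\gamma:y^*(y_\gamma)\neq 0\}$ is countable for every $y^*\in Y^*$. Because the closed ideal generated by a set depends only on its closed linear span, $I(\{y_\gamma\})=I(Y)=X$, so by \eqref{eqn:I} we get $X=\overline{\spn}\big(\bigcup_\gamma[-|y_\gamma|,|y_\gamma|]\big)$. Each order interval $[-|y_\gamma|,|y_\gamma|]$ is separable by hypothesis; choosing a countable dense set $D_\gamma$ in it and setting $D:=\bigcup_\gamma D_\gamma\subset B_X$ (note $|d|\leq|y_\gamma|$ forces $\|d\|\leq 1$), we obtain a bounded family that is linearly dense in $X$. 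It then remains only to verify that $D$ has countable support against every $x^*\in X^*$.

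The key reduction is that for $d\in D_\gamma$ one has $|d|\leq|y_\gamma|$, whence $|x^*(d)|\leq|x^*|(|y_\gamma|)$ for all $x^*\in X^*$; so it suffices to prove that for each $\phi\in X^*_+$ the set $\{\gamma:\phi(|y_\gamma|)>0\}$ is countable. This is exactly the main obstacle, because $\phi(|y_\gamma|)$ mixes data outside the reach of $Y$: the modulus $|y_\gamma|$ need not lie in $Y$, and $\phi$ is a functional on $X$, not on $Y$. To get around this I would invoke the Riesz--Kantorovich formula in the Banach lattice $X^*$, applied to the element $x\in X\subset X^{**}=(X^*)^\sim$, which yields
\begin{equation*}
\phi(|x|)=\sup\{\psi(x):\psi\in X^*,\ |\psi|\leq\phi\}=\sup\{\psi(x):\psi\in[-\phi,\phi]\}.
\end{equation*}
Now the order interval $[-\phi,\phi]\subset X^*$ is $w^*$-separable by hypothesis, and $\psi\mapsto\psi(x)$ is $w^*$-continuous, so for a $w^*$-dense sequence $\{\psi_n\}\subset[-\phi,\phi]$ the supremum over the interval equals the supremum over the sequence, giving $\phi(|y_\gamma|)=\sup_n\psi_n(y_\gamma)$. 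Consequently $\{\gamma:\phi(|y_\gamma|)>0\}\subset\bigcup_n\{\gamma:\psi_n(y_\gamma)\neq 0\}$.

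Finally, each restriction $\psi_n|_Y$ lies in $Y^*$, so the WLD-ness of $Y$ makes every set $\{\gamma:\psi_n(y_\gamma)\neq 0\}$ countable; a countable union of countable sets is countable, so $\{\gamma:\phi(|y_\gamma|)>0\}$ is countable, and hence (taking $\phi=|x^*|$) so is $\{d\in D:x^*(d)\neq 0\}$ for an arbitrary $x^*\in X^*$. This gives the desired countably supported linearly dense family and proves that $X$ is WLD. I expect the single delicate point to be the justification of the displayed identity expressing $\phi(|x|)$ as a supremum over the dual order interval $[-\phi,\phi]$ — which is precisely where the $w^*$-separability of the order intervals of $X^*$ is used — while the separability of the intervals of $X$ supplies the family $D$ and everything else is bookkeeping with countable unions.
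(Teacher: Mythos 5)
Your proposal is correct and follows essentially the same route as the paper's own proof: transfer a linearly dense, countably supporting family from the WLD subspace $Y$, use $X=I(Y)$ and \eqref{eqn:I} to pass to the solid hull, pick countable dense subsets of the order intervals $[-|y_\gamma|,|y_\gamma|]$ (separability of intervals in $X$), and control supports via the identity $\phi(|x|)=\sup\{\psi(x):\psi\in[-\phi,\phi]\}$ together with the $w^*$-separability of $[-\phi,\phi]\subset X^*$, concluding that $(B_{X^*},w^*)$ is Corson. The paper packages these two steps as Lemmas~\ref{lemma:separable-intervals} and~\ref{lemma:separable-intervals2}, citing \cite[Theorem~1.23]{ali-bur} for the supremum formula instead of your detour through $X^{**}=(X^*)^\sim$, but the substance is identical.
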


Before proving Theorem~\ref{theorem:WLD}, let us mention that a Banach space 
is WCG if (and only if) it is Asplund generated and WLD, see e.g. \cite[Theorem~8.3.4]{fab-J}.
This fact together with Theorems~\ref{theorem:AsplundGenerated} and~\ref{theorem:WLD} yield the following:

\begin{corollary}\label{corollary:Rector}
Let $X$ be a Banach lattice such that the order intervals of~$X$ and~$X^*$ are separable and $w^*$-separable, respectively.
If $X$ is IWCG and $X$ contains no subspace isomorphic to~$C[0,1]$, then $X$ is WCG.
\end{corollary}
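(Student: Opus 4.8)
The plan is to deduce the Corollary from the three ingredients isolated just before its statement: Theorem~\ref{theorem:AsplundGenerated}, Theorem~\ref{theorem:WLD}, and the quoted equivalence that a Banach space is WCG precisely when it is simultaneously Asplund generated and WLD. The two substantive inputs do the heavy lifting, so the task reduces to verifying that their hypotheses are met in the present setting and then assembling the conclusions.

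First I would arrange to apply Theorem~\ref{theorem:WLD}, which requires a WLD subspace $Y \subset X$ with $X = I(Y)$. Starting from the IWCG hypothesis, fix a weakly compact set $K \subset X$ with $X = I(K)$ and put $Y := \overline{\spn}(K)$. Being the closed linear span of a weakly compact set, $Y$ is WCG and hence WLD. The inclusions $K \subset Y \subset I(K)$ force $I(K) \subset I(Y) \subset I(I(K)) = I(K)$, so that $I(Y) = I(K) = X$. Together with the standing assumptions that the order intervals of $X$ are separable and those of $X^*$ are $w^*$-separable, Theorem~\ref{theorem:WLD} then yields that $X$ is WLD.

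Second, since $X$ is IWCG and contains no subspace isomorphic to $C[0,1]$, Theorem~\ref{theorem:AsplundGenerated} guarantees that $X$ is Asplund generated. Combining the two conclusions through the equivalence WCG $\Leftrightarrow$ (Asplund generated $+$ WLD) gives that $X$ is WCG, as desired.

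The genuine difficulty in this chain lies entirely inside Theorem~\ref{theorem:WLD} (whose proof is deferred in the text), where the WLD property of the generating subspace must be propagated to the whole ideal $I(Y)$ by exploiting the lattice structure and the separability of order intervals. For the Corollary itself there is no real obstacle: the only point requiring a moment's care is the identity $I(Y) = I(K)$, which is what secures that the WLD subspace $Y$ actually generates $X$ as an ideal, thereby making Theorem~\ref{theorem:WLD} applicable.
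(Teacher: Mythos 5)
Your proposal is correct and follows exactly the route the paper intends: combine Theorem~\ref{theorem:AsplundGenerated} (Asplund generation) and Theorem~\ref{theorem:WLD} (WLD) with the fact that WCG $=$ Asplund generated $+$ WLD. The paper leaves the bridging step implicit, and you supply it properly: taking $Y=\overline{\spn}(K)$, which is WCG hence WLD, and checking $I(Y)=I(K)=X$ so that Theorem~\ref{theorem:WLD} applies.
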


In order to prove Theorem~\ref{theorem:WLD} we first need two lemmas.
Given a Banach space~$X$, we say that a set $C \subset X$ {\em countably supports~$X^*$}
if for every $x^*\in X^*$ the set $\{x\in C: x^*(x)\neq 0\}$ is countable. 

\begin{lemma}\label{lemma:separable-intervals}
Let $X$ be a Banach lattice such that the order intervals of~$X^*$ are $w^*$-separable. 
If $C \subset X$ countably supports~$X^*$, then for every $x^*\in X^*$ the set $\{x\in C: x^*(|x|)\neq 0\}$ is countable. 
\end{lemma}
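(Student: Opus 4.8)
The plan is to overcome the nonlinearity of the map $x\mapsto|x|$ by expressing the value $\phi(|x|)$, for a positive functional~$\phi$, as a supremum of \emph{linear} evaluations over an order interval in~$X^*$, and then to exploit the $w^*$-separability of that interval to reduce to countably many functionals, each of which is controlled by the hypothesis that $C$ countably supports~$X^*$.

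First I would reduce to positive functionals. Given $x^*\in X^*$, I decompose $x^*=(x^*)^+-(x^*)^-$ in the (Dedekind complete) dual lattice~$X^*$. For every $x\in X$ we have $x^*(|x|)=(x^*)^+(|x|)-(x^*)^-(|x|)$, so whenever $x^*(|x|)\neq 0$ at least one of $(x^*)^+(|x|)$, $(x^*)^-(|x|)$ is nonzero. Hence
$$
	\{x\in C:\,x^*(|x|)\neq 0\}\subseteq \{x\in C:\,(x^*)^+(|x|)\neq 0\}\cup\{x\in C:\,(x^*)^-(|x|)\neq 0\},
$$
and it suffices to prove that $\{x\in C:\,\phi(|x|)\neq 0\}$ is countable for an arbitrary $\phi\in(X^*)_+$.

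Next, fixing $\phi\geq 0$, I would invoke the Riesz--Kantorovich formula (see e.g.\ \cite{ali-bur}): for every $x\in X$,
$$
	\phi(|x|)=\sup\{\psi(x):\,\psi\in X^*,\ |\psi|\leq\phi\}=\sup_{\psi\in[-\phi,\phi]}\psi(x).
$$
By hypothesis the order interval $[-\phi,\phi]\subset X^*$ is $w^*$-separable, so I pick a countable $w^*$-dense subset $\{\psi_n:\,n\in\N\}\subset[-\phi,\phi]$. Since each evaluation $\psi\mapsto\psi(x)$ is $w^*$-continuous and the supremum of a continuous function over a set agrees with its supremum over any dense subset, I obtain $\phi(|x|)=\sup_{n\in\N}\psi_n(x)$ for every $x\in X$.

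Finally, if $x\in C$ satisfies $\phi(|x|)\neq 0$, then $\phi(|x|)>0$ (as $\phi\geq 0$ and $|x|\geq 0$), so $\psi_n(x)>0$ for some~$n$, whence
$$
	\{x\in C:\,\phi(|x|)\neq 0\}\subseteq\bigcup_{n\in\N}\{x\in C:\,\psi_n(x)\neq 0\}.
$$
Each set on the right is countable because $C$ countably supports~$X^*$ and $\psi_n\in X^*$, so the left-hand side is countable as a countable union of countable sets. The key point, and the only genuine obstacle, is the passage from the nonlinear quantity $\phi(|x|)$ to the linear evaluations $\psi_n(x)$; once the Riesz--Kantorovich representation and the $w^*$-separability of the order interval are in place, the countable support hypothesis applies directly and the rest is routine.
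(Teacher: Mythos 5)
Your proof is correct and follows essentially the same route as the paper's: reduce to a positive functional, apply the Riesz--Kantorovich identity $x^*(|x|)=\sup\{y^*(x):y^*\in[-x^*,x^*]\}$, replace the supremum over the order interval by a supremum over a countable $w^*$-dense subset, and conclude via a countable union. The only cosmetic difference is that you spell out the density argument and the positivity of the supremum, which the paper leaves implicit.
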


\begin{proof}
Since every element of~$X^*$ is the difference of two positive functionals, it suffices
to check that for every $x^*\in X^*_+$ the set $\{x\in C: x^*(|x|)\neq 0\}$ is countable. Fix
a $w^*$-dense sequence $(x_n^*)_{n\in \N}$ in~$[-x^*,x^*]$. Then for every $x\in X$ we have
$$
	x^*(|x|)=\sup\big\{y^*(x): \, y^*\in [-x^*,x^*]\big\}=
	\sup_{n\in \N} x_n^*(x)
$$
(see e.g. \cite[Theorem~1.23]{ali-bur}). Therefore
$$
	\{x\in C: \, x^*(|x|)\neq 0\}
	\subset \bigcup_{n\in \N} \{x\in C: \, x_n^*(x)\neq 0\},
$$
and so $\{x\in C: x^*(|x|)\neq 0\}$ is countable, as required.
\end{proof}

\begin{lemma}\label{lemma:separable-intervals2}
Let $X$ be a Banach lattice such that the order intervals of~$X$ and~$X^*$ are separable and $w^*$-separable, respectively.
If $C \subset X$ countably supports~$X^*$, then there is a set $P \subset {\rm sol}(C)$ such that 
${\rm sol}(C) \subset \overline{P}$ and $P$ countably supports~$X^*$.
\end{lemma}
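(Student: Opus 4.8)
The plan is to construct $P$ by choosing, inside each order interval whose union forms $\sol(C)$, a countable norm-dense subset, and then to verify the countable support condition by reducing it to Lemma~\ref{lemma:separable-intervals} applied to the modulus of the functional.

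First I would use the separability of the order intervals of~$X$. Since $\sol(C)=\bigcup_{x\in C}[-|x|,|x|]$, for each $x\in C$ the order interval $[-|x|,|x|]$ is separable, so I can fix a countable norm-dense subset $D_x\subset[-|x|,|x|]$. I then set $P:=\bigcup_{x\in C}D_x$. By construction $P\subset\sol(C)$, and since $[-|x|,|x|]\subset\overline{D_x}\subset\overline{P}$ for every $x\in C$, taking the union over $x\in C$ yields $\sol(C)\subset\overline{P}$, which gives the density requirement.

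It remains to check that $P$ countably supports~$X^*$, and this is the crux. Fix $x^*\in X^*$; the set to control is $\{y\in P:x^*(y)\neq 0\}$. If $y\in P$ satisfies $x^*(y)\neq 0$, then $y\in D_x$ for some $x\in C$, so $|y|\leq|x|$ and hence
$$
0<|x^*(y)|\leq |x^*|(|y|)\leq |x^*|(|x|),
$$
where we use that $X^*$ is itself a Banach lattice and $|x^*|\in X^*$. Thus every such $x$ lies in $C_{x^*}:=\{x\in C:|x^*|(|x|)\neq 0\}$, and therefore
$$
\{y\in P:x^*(y)\neq 0\}\subset\bigcup_{x\in C_{x^*}}D_x.
$$
Now I would invoke Lemma~\ref{lemma:separable-intervals} applied to the functional $|x^*|\in X^*$: since $C$ countably supports~$X^*$ and the order intervals of~$X^*$ are $w^*$-separable, the set $C_{x^*}$ is countable. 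As each $D_x$ is countable, the right-hand side is a countable union of countable sets, hence countable, proving that $P$ countably supports~$X^*$.

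The only genuinely delicate point is the passage from $x^*(y)\neq 0$ to membership in a countable index set: because $x^*$ need not be positive, one cannot conclude $x^*(|x|)\neq 0$ directly, and it is essential to pass to the modulus $|x^*|$ and apply the preceding lemma to $|x^*|$ rather than to~$x^*$. Everything else is routine bookkeeping resting on the separability of the order intervals of~$X$.
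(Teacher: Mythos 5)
Your proof is correct and follows essentially the same route as the paper: the same set $P=\bigcup_{x\in C}D_x$ built from countable dense subsets of the order intervals, with the countable-support property reduced to Lemma~\ref{lemma:separable-intervals}. The only (inessential) difference is that you apply that lemma to the modulus $|x^*|$ via the inequality $|x^*(y)|\leq|x^*|(|y|)$, whereas the paper handles positive functionals first and then invokes the decomposition of an arbitrary $x^*$ as a difference of two positive functionals.
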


\begin{proof}
For every $x\in C$ we take a countable dense set $A_x \subset [-|x|,|x|]$. Therefore, 
$P:=\bigcup_{x\in C}A_x$ is dense in ${\rm sol}(C)=\bigcup_{x\in C}[-|x|,|x|]$.
Fix $x^*\in X^*_+$. By Lemma~\ref{lemma:separable-intervals}, the set $C_0:=\{x\in C: x^*(|x|)\neq 0\}$
is countable. Since $x^*(y)=0$ for every $y\in [-|x|,|x|]$ whenever $x\in C \setminus C_0$, we have
$$
	\{y\in P: \, x^*(y)\neq 0\} \subset \bigcup_{x\in C_0}A_x,
$$
and so $\{y\in P: \, x^*(y)\neq 0\}$ is countable. As $x^*\in X^*_+$ is arbitrary and
every element of~$X^*$ is the difference of two positive functionals, $P$ countably supports~$X^*$.
\end{proof}

\begin{proof}[Proof of Theorem~\ref{theorem:WLD}]
Any WLD Banach space admits an M-basis, see e.g. \cite[Corollary~5.42]{fab-alt-JJ}.
Let $\{(y_i,y_i^*):i\in I\} \subset Y \times Y^*$ be an M-basis of~$Y$, that is, 
a biorthogonal system such that $Y=\overline{{\rm span}}(\{y_i:i\in I\})$ and $\{y_i^*:i\in I\}$ separates the points of~$Y$.
We can assume without loss of generality that $\|y_i\|\leq 1$ for all $i\in I$.
The fact that $Y$ is WLD ensures that $C:=\{y_i:i\in I\}$ countably supports~$X^*$ 
(see e.g. \cite[Theorem~5.37]{fab-alt-JJ}). Let $P \subset {\rm sol}(C)$ such that 
${\rm sol}(C) \subset \overline{P}$ and $P$ countably supports~$X^*$
(Lemma~\ref{lemma:separable-intervals2}). Since $X=I(Y)$, we have 
$$
	X=I(C)\stackrel{\eqref{eqn:I}}{=}\overline{{\rm span}}({\rm sol}(C))=\overline{{\rm span}}(P).
$$
It is now clear that the mapping
$$
	\phi: B_{X^*} \to [-1,1]^P, \quad
	\phi(x^*):=(x^*(x))_{x\in P},
$$
is a $w^*$-pointwise homeomorphic embedding witnessing that $(B_{X^*},w^*)$ is a Corson compact.
\end{proof}

Besides the separable case, the following Banach lattices have the property that the order intervals of their dual are $w^*$-separable:
\begin{enumerate}
\item[(i)] WLD Banach spaces with unconditional basis, like $c_0(\Gamma)$ and $\ell_p(\Gamma)$ for any $1<p<\infty$ and any non-empty set~$\Gamma$.
In this case, the order intervals of the dual have the stronger property of being $w^*$-metrizable.
\item[(ii)] $C(K)$, whenever $K$ is a compact space with the property that $L^1(\mu)$ is separable for 
every regular Borel probability $\mu$ on~$K$. This class of compact spaces includes all compacta which are Eberlein, Radon-Nikod\'{y}m, 
Rosenthal or linearly ordered, among others, see \cite{dza-kun,mar-ple:12,ple-sob}. In this case,
the order intervals of the dual are norm separable.
\end{enumerate}
On the other hand, it is not difficult to check that $L^1(\{0,1\}^{\omega_1})$ is a Banach lattice
for which the conclusion of Lemma~\ref{lemma:separable-intervals} fails.

\def\cprime{$'$}\def\cdprime{$''$}
  \def\polhk#1{\setbox0=\hbox{#1}{\ooalign{\hidewidth
  \lower1.5ex\hbox{`}\hidewidth\crcr\unhbox0}}} \def\cprime{$'$}
\providecommand{\bysame}{\leavevmode\hbox to3em{\hrulefill}\thinspace}
\providecommand{\MR}{\relax\ifhmode\unskip\space\fi MR }
\providecommand{\MRhref}[2]{%
  \href{http://www.ams.org/mathscinet-getitem?mr=#1}{#2}
}
\providecommand{\href}[2]{#2}

\bibliographystyle{amsplain}

\end{document}